\DeclareMathOperator{\supp}{supp}                           % support
\newcommand{\N}{\mathbb{N}}             % natural numbers
\newcommand{\Z}{\mathbb{Z}}             % integer numbers
\newcommand{\C}{\mathbb{C}}             % complex numbers
\newcommand{\K}{\mathbb{K}}             % real numbers
\newcommand{\Orb}{\mathrm{Orb}} 
\newcommand{\Chi}{\mathrm{Chi}}                               %set of childs
\newcommand{\p}{\mathrm{par}}                               %parent
\newcommand{\ro}{\texttt{r}}                              %root
\def\<{\langle}
\def\>{\rangle}
\theoremstyle{plain}
\newtheorem{theorem}{Theorem}[section]
\newtheorem{lemma}[theorem]{Lemma}
\newtheorem{corollary}[theorem]{Corollary}
\newtheorem{proposition}[theorem]{Proposition}
\theoremstyle{definition}
\newtheorem*{definition*}{Definition}
\newtheorem{definition}[theorem]{Definition}
\newtheorem{example}[theorem]{Example}
\theoremstyle{remark}
\newtheorem{remark}[theorem]{Remark}
\begin{document}

\title[Backward shift on  directed trees]{On several dynamical properties of Shifts acting on directed trees}

\author[E. Abakumov]{Evgeny Abakumov}

\author[A. Abbar]{Arafat Abbar}

\address[E. Abakumov]{Univ Gustave Eiffel, Univ Paris Est Creteil, CNRS, LAMA UMR8050 F-77447 Marne-la-Vallée, France}
\email{evgueni.abakoumov@univ-eiffel.fr}

\address[A. Abbar]{Univ Gustave Eiffel, Univ Paris Est Creteil, CNRS, LAMA UMR8050 F-77447 Marne-la-Vallée, France}
\email{abbar.arafat@gmail.com}

\date{} % uncomment to remove date from title

% ABSTRACT

\begin{abstract}
This paper explores the notions of $\mathcal{F}$-transitivity and topological $\mathcal{F}$-recurrence for backward shift operators on weighted $\ell^p$-spaces and $c_0$-spaces on directed trees, where $\mathcal{F}$ represents a Furstenberg family of subsets of $\mathbb{N}_0$. In particular, we establish the equivalence between recurrence and hypercyclicity of these operators on unrooted directed trees. For rooted directed trees, a backward shift operator is hypercyclic if and only if it possesses an orbit of a bounded subset that is weakly dense.
\end{abstract}

% KEYWORDS
\subjclass[2020]{47A16, 05C05, 47B37, 46B45}
\keywords{Backward Shift, Directed tree,  Hypercyclicity,  Orbital limit points, Recurrence, Weak hypercyclicity.}

\maketitle

\setcounter{tocdepth}{1}
%\tableofcontents

% MAIN DOCUMENT

\numberwithin{equation}{section}

\section{Introduction}
The study of the dynamical properties of weighted shift operators on sequence spaces of trees has been an active research topic in recent years. Martínez-Avendaño \cite{Ma}, Rivera-Guasco and Martínez-Avendaño \cite{RiMa}, and Grosse-Erdmann and Papathanasiou \cite{GrPa, GrPa2} have made significant contributions in this field. See also \cite{AnAv, BaLi} for some further related research. Jblónski, Jung, and Stochel \cite{JJS} introduced and studied weighted forward shift operators on trees, while backward shift operators were introduced in \cite{Ma}.\ In \cite[Theorems 4.3 and 5.2]{GrPa}, the authors provided a complete characterization of the hypercyclicity of backward shift operators on weighted $\ell^p$-spaces and $c_0$-spaces on directed trees. 

Recall that a bounded linear operator is \textit{hypercyclic} if it has a dense orbit. More precisely, let us consider  $X$ to be a Banach space and let $\mathcal{L}(X)$ be the space of bounded linear operators from $X$ into itself. An operator $T\in\mathcal{L}(X)$  is said to be \textit{hypercyclic} if there exists a vector $x\in X$ such that its orbit under $T$, denoted by  $$\Orb(x,T):=\{T^nx:\, n\in\N_0\},$$
 is dense in $X$, where $\N_0=\{0,1,2,\ldots\}$ and $\N=\N_0\setminus\{0\}$. Hypercyclicity is a well-studied concept in Linear Dynamics, see \cite{BaMa, GrPe}. In \cite{Sa1}, Salas provided a description of hypercyclic weighted backward shift operators on $\ell^p$-spaces, which was later extended by Grosse-Erdmann to backward shift operators defined on Fréchet sequence spaces \cite{Gr}. Additionally, Chan and Seceleanu proved that the hypercyclicity of backward shift operators is equivalent to the existence of an orbit with a nonzero limit point \cite{CS}. Some recent improvements to these results have been achieved in \cite{AA, BG, HHY}. An immediate consequence of Chan and Seceleanu's result is that the notions of recurrence and hypercyclicity coincide for backward shift operators on weighted $\ell^p$-spaces of sequences indexed by $\Z$. A vector $x\in X$ is called  \textit{recurrent vector} for an operator $T\in\mathcal{L}(X)$ if there exists a strictly increasing sequence of integers $(n_k)_{k\in\N_0}$ such that 
$$T^{n_k}x\underset{k\to+\infty}{\longrightarrow}x.$$
The operator $T$ is called \textit{recurrent} if its set of recurrent vectors is dense in $X$, or equivalently (see \cite[Proposition 2.1]{CMP}), for every non-empty open set $U$ of $X$, there exists an integer $n\in\N$ such that 
$$T^n(U)\cap U\neq\emptyset.$$
The study of recurrence for linear operators was initiated by Costakis and Parissis \cite{CP}, and later developed further by Costakis, Manoussos, and Parissis \cite{CMP}. For recent contributions, see also \cite{ABBM, BGLP, GrLo, GLP}.
For any nonempty open subsets $U$ and $V$ of $X$, the return set (or, time set) from $U$ to $V$ will be denoted as 
$$N_T(U,V):=N(U,V)=\{n\in\N_0:\, T^n(U)\cap V\neq\emptyset\},$$
When there is no ambiguity,  the index $T$ will be omitted.
Several topological notions in Linear Dynamics can be expressed by using return sets. The operator $T$ is said to be topologically transitive if for any nonempty open subsets $U,V$  of $X$, the return set $N(U,V)$ is nonempty (or, equivalently, infinite). Birkhoff’s transitivity theorem says that hypercyclicity and transitivity coincide, see \cite[Theorem 1.2]{BaMa}. Moreover, $T$ is said to be topologically mixing (resp., weakly mixing, ergodic) if for any nonempty open subsets $U,V$  of $X$, the return set $N(U,V)$ is 
cofinite (resp., thick, syndetic).
Recall that a subset $A$ of $\N_0$ is called 
\begin{itemize}
    \item  \textit{thick}  if it contains arbitrarily long intervals, i.e., for every $n\in\N_0$, there exists $m\in A$ such that $[\![m,m+n]\!]\subset A$. 
    \item  \textit{syndetic}  if it has bounded gaps, i.e., there exists an integer $n\in\N$ such that for all $m\in\N_0$, we have $[\![m,m+n]\!]\cap A\neq\emptyset$.
\end{itemize}
Furstenberg families, as defined in subsection \ref{Fur_families}, serve to generalize the concepts of topological transitivity, mixing, weakly mixing, and ergodicity in terms of $\mathcal{F}$-transitivity, see \cite{BMPP2}.  Let $\mathcal{F}$  be a Furstenberg family of subsets of $\N_0$.
 The operator $T$ is called \textit{$\mathcal{F}$-transitive} if, for any pair of nonempty open subsets $U$ and $V$ in $X$, the set $N(U,V)$ belongs to $\mathcal{F}$. Similarly, following a similar approach (see \cite[Remark 8.2]{BGLP}), $T$ is said to be topologically $\mathcal{F}$-recurrent if, for any nonempty open subset $U$ in $X$, the set $N(U,U)$ belongs to $\mathcal{F}$. 
In \cite{BMPP2}, the authors provided a characterization of $\mathcal{F}$-transitive weighted bilateral backward shift operators on the spaces $\ell^p(\mathbb{Z})$ and $c_0(\mathbb{Z})$, as well as $\mathcal{F}$-transitive weighted unilateral backward shift operators.

\smallskip

This paper is devoted to studying $\mathcal{F}$-transitive and topological $\mathcal{F}$-recurrence backward shift operators on trees, which extends the results obtained in  \cite{BMPP2, GrPa}. Additionally, we investigate the concept of $\Gamma$-supercyclicity, introduced in \cite{CEM}, for these operators. The structure of this paper is outlined as follows:

In section \ref{Pre}, we introduce the necessary notations and definitions for our analysis, including  Furstenberg families and a review of backward shift operators on weighted $\ell^p$-spaces and $c_0$-spaces on trees.  

In section  \ref{section3}, we give a variant of the $\mathcal{F}$-transitivity criterion that we will specifically use to describe $\mathcal{F}$-transitive backward shift operators on unrooted directed trees (see Theorem \ref{Trans_Crite}).

Section \ref{section4} focuses on characterizing $\mathcal{F}$-transitive backward shift operators on weighted $\ell^p$-spaces and $c_0$-spaces defined on rooted directed trees. We demonstrate that $\mathcal{F}$-transitivity is equivalent to a property weaker than topological $\mathcal{F}$-recurrence (see Theorem \ref{FTransitive_rooted case}). Additionally, we derive a corollary indicating that hypercyclicity of a backward shift operator $B$ on a weighted $\ell^p$-space or $c_0$-space defined on a rooted directed tree is equivalent to the existence of a bounded subset whose orbit under $B$ is weakly dense in the underlying space (see  Corollary \ref{hc_rooted}).

In section \ref{section5}, we provide a characterization of $\mathcal{F}$-transitivity and establish its equivalence with topological $\mathcal{F}$-recurrence for backward shift operators on weighted $\ell^p$-spaces and $c_0$-spaces defined on unrooted directed trees (see Theorem \ref{FTransitive_unrooted case}).

In section \ref{section6}, we discuss the concept of $\Gamma$-supercyclicity for  backward shift operators on trees (see Theorem \ref{super_criterion}). This enables us to recover the characterization of hypercyclicity for these operators, as established in \cite{GrPa}, and also to provide a characterization of their supercyclicity.

Finally, in the last section, we demonstrate that unlike the cases of the trees $\mathbb{N}_0$ or $\mathbb{Z}$, there exist backward shift operators on weighted spaces on trees that are not hypercyclic but possess orbits with a non-zero limit point (see Examples \ref{rooted_not_hc} and \ref{example1}). Theorem \ref{recu_tree_shift} characterizes backward shifts that have an orbit with a non-zero limit point in the case of rooted trees. For unrooted cases, we provide a characterization of backward shifts that have an orbit of some non-negative function with a non-zero limit point (see Proposition \ref{th_unrooted_tree}).

\section{Preliminaries}\label{Pre}
\subsection{Directed trees}\label{trees} In what follows, we will recall the needed terminologies related to trees, for more information we refer to \cite{JJS,GrPa} and the references therein.
A \textit{(directed) graph} is a pair $\mathcal{G}=(V,E)$ which satisfies the two conditions:
\begin{enumerate}
    \item $V$ is a non-empty set, its elements are called \textit{vertices}.
    \item $E$ is a subset of $\lbrace (u,v): u,v\in V, u\neq v\rbrace$, its elements are called \textit{edges}.
\end{enumerate}
An \textit{undirected edge} of $\mathcal{G}$  means an element of the set
$$\tilde{E}:=\lbrace \lbrace u,v\rbrace:\, (u,v)\in E \text{ or } (v,u)\in E\rbrace.$$
If $\lbrace u,v\rbrace\in \tilde{E}$, we say that $u$ and $v$ are adjacent and we denote that by $u\backsim v$.
\begin{itemize}
    \item The  graph $\mathcal{G}$ is \textit{connected} if any two distinct vertices $u\neq v\in V$  are related by an  \textit{undirected path}, i.e., there exist $v_1,\ldots,v_n\in V$  such that $u=v_1\backsim v_2\backsim \cdots\backsim v_n=v$.
 \item A \textit{circuit} of $\mathcal{G}$ is a  sequence $\lbrace v_j\rbrace_{j=1}^{n}$ of distinct vertices ($n\geqslant2$) such that, for all $j\in\{ 1,\ldots,n-1\}$,
$$(v_j,v_{j+1})\in E \quad \text{and} \quad (v_n,v_1)\in E.$$
 
\end{itemize}

\begin{definition}[Directed trees]
A \textit{directed tree} $\mathcal{T}=(V,E)$  is a  directed graph such that
\begin{enumerate}
\item $(V,E)$ is connected.
\item The set $E$ of edges has no circuits. %cycles.
\item For any vertex $v\in E$, there exists at most one vertex $u\in V$ such that $(u,v)\in E$.
\item The set $V$ of vertices is countable.
\end{enumerate}
\end{definition}
In what follows, let us assume that  $(V,E)$ is a directed tree.
\begin{itemize}
\item A \textit{parent} of a vertex $v\in V$ is a vertex $u\in V$ such that $(u,v)\in E$, we denote such vertex by $\p(v)$.
\item A \textit{child} of a vertex $v\in V$ is a vertex $u\in V$ whose parent is $v$. We denote by $\Chi(v)$ the set of children of $v$, that is, 
$$\Chi(v)=\lbrace u\in V: \, \p(u)=v\rbrace.$$
Moreover, for every $n\geqslant2$, we denote
$$\Chi^n(v):=\bigcup_{u\in \Chi(v)}\Chi^{n-1}(u),$$
where $\Chi^{1}(v)=\Chi(v)$ and $\Chi^{0}(v)=\lbrace v\rbrace$.
\item[•] A \textit{root} of $(V,E)$ is a  vertex without a parent.   Any directed tree has at most one root, if such an element exists, it will be denoted simply by $\ro$. 
\item A \textit{leaf} of $(V,E)$ is a vertex without children. 
\end{itemize}

\subsection{Backward shift on sequences spaces on directed trees.}\label{sub_shift}
Let $(V,E)$ be a directed tree, let $\mu=(\mu_v)_{v\in V}$ be a \textit{weight} on $V$, that is, a sequence of non-zero numbers and let $\K$ denote the set of real or complex numbers. For $1\leqslant p<+\infty$,  the weighted $\ell^p$-space of $V$ is defined by
$$\ell^p(V,\mu):=\Big\lbrace f\in\mathbb{K}^{V}: \,\sum_{v\in V}|f(v)\,\mu_v|^p<+\infty\Big\rbrace,$$
quipped with the norm
$$\|f\|_{p,\mu}:=\Big(\sum_{v\in V}|f(v)\,\mu_v|^p\Big)^{1/p}$$
is a Banach space. 
As usually, $\ell^\infty(V,\mu)$ is the Banach space of function $f\in\mathbb{K}^{V}$ such that
$$\|f\|_{\infty,\mu}=\underset{v\in V}{\sup}|f(v)\,\mu_v|<+\infty.$$
Finally,  the weighted $c_0$-space on $V$ is defined by
$$c_0(V,\mu):=\Big\lbrace f\in\mathbb{K}^{V}: \, \forall\varepsilon>0, \exists F\subset V \text{  finite}, \forall v\in V\setminus F, |f(v)\,\mu_v|<\varepsilon\Big\rbrace,$$
endowed with the norm $\|.\|_{\infty,\mu}$ is a closed subspace of $\ell^\infty(V,\mu)$. 
For $v\in V$, let $e_v=\chi_{\lbrace v\rbrace}$ be the characteristic function of $\lbrace v\rbrace$. Note that the space $\mathrm{span}\lbrace e_v:\, v\in V\rbrace$
is dense in $X=\ell^p(V,\mu)$, $1 \leq p<+\infty$ or $X=c_0(V,\mu)$. The support of a function $f\in \K^{V}$ is the set 
$$\supp(f):=\lbrace v\in V:\, f(v)\neq0\rbrace.$$
According to \cite{Ma}, the backward shift $B$ on $\K^V$ is defined  by:
$$(Bf)(v)=\sum_{u\in \mathrm{Chi}(v)} f(u),\quad v\in V,$$
where an empty sum is zero. It  can be seen as the adjoint of the forward shift operator $S$ on trees, which is naturally defined on $\K^V$ as follows, for any $f\in \K^V$ and $v\in V$:
\begin{center}
\begin{minipage}{0.2\linewidth}
~~
\end{minipage}
\begin{minipage}{0.4\linewidth}
\begin{equation*}
(Sf)(v)=\begin{cases}
f(\p(v))&\text{ if } v\neq \ro \\
&\\
0 &\text{ if } v=\ro
\end{cases},
\end{equation*}
\end{minipage}
\begin{minipage}{0.08\linewidth}
~~
\end{minipage}
\begin{minipage}[c]{0.3\linewidth}
\begin{tikzpicture}[thick, scale=0.7]

  \draw (0,0) node {{\tiny $\bullet$}} ;
\draw (1,0.7) node {{\tiny $\bullet$}} ;
\draw (1,-0.7) node {{\tiny $\bullet$}} ;
\draw (2,0.3) node {{\tiny $\bullet$}} ;
\draw (2,1.3) node {{\tiny $\bullet$}};
\draw (2,-0.3) node {{\tiny $\bullet$}};
\draw (2,-1.3) node {{\tiny $\bullet$}};

%\draw (3,4) node {{\tiny $\bullet$}};
%\draw (3,2.5) node {{\tiny $\bullet$}};
\draw (3,0) node {{\tiny $\bullet$}};
\draw (3,0.7) node {{\tiny $\bullet$}};
\draw (3,-1.3) node {{\tiny $\bullet$}};

% style des flèches
\tikzstyle{suite}=[->,>=stealth,thick,rounded corners=4pt]

\draw[suite] (0,0) -- (1,0.7) edge (2,1.3);
\draw[suite] (1,0.7) --(2,0.3)  edge (3,0.7);
\draw[suite] (0,0) -- (1,-0.7) edge (2,-0.3);
\draw[suite] (1,-0.7) -- (2,-1.3) edge (3,-1.3);
\draw[suite] (2,0.3) -- (3,0);

\node[left=2pt] at (0.1,0) {{\scriptsize $\ro$}};
\node[right] at (1.95,-0.3) {{\scriptsize $v$}};
\node[below left] at (1.2,-0.7) {{\scriptsize $\p(v)$}};
\end{tikzpicture}
\end{minipage}
\end{center}
Under the following pairing of duality
$$\langle f,g \rangle=\sum_{v\in V}f(v)\,g(v),$$
the dual of $\ell^p(V,\mu)$, for $1\leq p<+\infty$, is $\ell^{p^\ast}(V,1/\mu)$, where $p^\ast$ is the conjugate exponent of $p$, and the dual of $c_0(V,\mu)$ is $\ell^1(V,1/\mu)$. Now, if $B$ or $S$ defines a bounded linear operator on $X=\ell^p(V,\mu)$, $1\leq p <+\infty$, or $X=c_0(V,\mu)$ then 
$$\langle Bf,g \rangle=\langle f,Sg \rangle,$$
and this is why $B$ has been named the backward shift. Alternatively, we can define the backward shift $B$ as the unique bounded linear operator that satisfies
$$
Be_v=\begin{cases}
e_{\p(v)}&\text{ if } v\neq \ro \\
0 &\text{ if } v=\ro
\end{cases},\qquad v\in V.
$$
In the following proposition, we will recall the necessary and sufficient conditions for the boundedness of the backward shift on weighted $\ell^p$-spaces or $c_0$-spaces on trees, see \cite[Proposition 2.3]{GrPa}. 
\begin{proposition} \label{Bounded_Back}
Let $(V,E)$ be a directed tree, let $\mu=(\mu_v)_{v\in V}$ be a weight on $V$, and let $B$ be the backward shift on $\K^V$.
\begin{itemize}
\item[$(a)$] $B$ is a bounded linear operator on $\ell^1(V,\mu)$ if and only if 
$$\sup_{v\in V\setminus\lbrace\ro\rbrace}\Big|\frac{\mu_{\p(v)}}{\mu_v}\Big|<+\infty.$$
In this case, $\|B\|=\underset{v\in V\setminus\lbrace\ro\rbrace}{\sup} \big|\frac{\mu_{\p(v)}}{\mu_v}\big|$.

\item[$(b)$] Let $1<p<+\infty$. $B$ is a bounded linear operator on $\ell^p(V,\mu)$ if and only if 
$$\underset{v\in V}{\sup}\, \sum_{u\in \Chi(v)} \Big|\frac{\mu_v}{\mu_u  }\Big|^{p^\ast}<+\infty.$$
In this case, $\|B\|=\underset{v\in V}{\sup} \, \Big(\underset{{u\in \Chi(v)}}{\sum} \big|\frac{\mu_v}{\mu_u }\big|^{p^\ast}\Big)^{1/p^{\ast}}$.

\item[$(c)$] $B$ is a bounded linear operator on $c_0(V,\mu)$ if and only if 
$$\underset{v\in V}{\sup}\, \sum_{u\in \Chi(v)} \Big|\frac{\mu_v}{\mu_u  }\Big|<+\infty.$$
In this case, $\|B\|=\underset{v\in V}{\sup}\, \underset{u\in \Chi(v)}{\sum} \big|\frac{\mu_v}{\mu_u}\big|$.
\end{itemize}
\end{proposition}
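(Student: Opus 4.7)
The plan is to treat the three cases in parallel: in each case, I will first establish the upper bound $\|Bf\|_{X} \le C \|f\|_{X}$ under the stated finiteness assumption (giving boundedness and one half of the norm equality), and then exhibit a family of test vectors saturating Hölder's inequality to get the matching lower bound.

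The key structural fact common to all three proofs is that $\{\Chi(v):v\in V\}$ is a partition of $V\setminus\{\ro\}$, because each non-root vertex has a \emph{unique} parent. This lets us reverse summation order freely: for any non-negative function $g$,
\begin{equation*}
\sum_{v\in V}\sum_{u\in\Chi(v)} g(u) \;=\; \sum_{u\in V\setminus\{\ro\}} g(u).
\end{equation*}
With this in hand, part $(a)$ is a direct computation. Starting from $(Bf)(v)=\sum_{u\in\Chi(v)} f(u)$, the triangle inequality and the partition identity give
\begin{equation*}
\|Bf\|_{1,\mu} \le \sum_{v\in V}|\mu_v|\sum_{u\in\Chi(v)}|f(u)| = \sum_{u\neq\ro}|f(u)\mu_u|\cdot\Big|\tfrac{\mu_{\p(u)}}{\mu_u}\Big|,
\end{equation*}
which yields $\|Bf\|_{1,\mu}\le M\|f\|_{1,\mu}$ with $M$ the claimed supremum. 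For the lower bound it suffices to test on $e_v$ for $v\neq\ro$, since $\|Be_v\|_{1,\mu}/\|e_v\|_{1,\mu}=|\mu_{\p(v)}/\mu_v|$.

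For part $(b)$, at each vertex $v$ I would apply Hölder's inequality on $\Chi(v)$ with the splitting $f(u)=(f(u)\mu_u)\cdot\mu_u^{-1}$, obtaining
\begin{equation*}
|(Bf)(v)\mu_v|^p \le \Big(\sum_{u\in\Chi(v)}|f(u)\mu_u|^p\Big)\cdot\Big(\sum_{u\in\Chi(v)}\big|\tfrac{\mu_v}{\mu_u}\big|^{p^\ast}\Big)^{p/p^\ast}.
\end{equation*}
Summing over $v$ and invoking the partition identity yields $\|Bf\|_{p,\mu}\le C^{1/p^\ast}\|f\|_{p,\mu}$ with $C$ the supremum appearing in $(b)$. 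For the reverse inequality, I would fix $v$ and a finite $F\subset\Chi(v)$ and consider a test function of the form $f(u)=\alpha_u \,\overline{\mu_u}\,|\mu_u|^{-p^\ast}\mathbbm{1}_F(u)$, where the unimodular phases $\alpha_u$ are chosen to align all terms in $\sum_{u\in F}f(u)\mu_v$ so that Hölder's inequality becomes an equality. This gives $\|Bf\|_{p,\mu}/\|f\|_{p,\mu}=\bigl(\sum_{u\in F}|\mu_v/\mu_u|^{p^\ast}\bigr)^{1/p^\ast}$; taking supremum over $F$ and $v$ yields $\|B\|\ge C^{1/p^\ast}$. Part $(c)$ follows a similar pattern but with the elementary bound $|(Bf)(v)\mu_v|\le \|f\|_{\infty,\mu}\sum_{u\in\Chi(v)}|\mu_v/\mu_u|$, and with a finitely supported test function $f(u)=\alpha_u/\mu_u$ on a finite $F\subset\Chi(v)$, the phases again chosen so $(Bf)(v)\mu_v=\sum_{u\in F}|\mu_v/\mu_u|$.

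The main technical delicacy, and the place where a careful proof must spend a few lines, is the choice of phases in the test vectors for parts $(b)$ and $(c)$ in the complex case: one must align the complex numbers $\{f(u)\mu_v\}_{u\in\Chi(v)}$ so that their sum has modulus equal to the sum of moduli, which is where the exact $\|B\|=C^{1/p^\ast}$ (resp.\ $\|B\|=C$) statements come from. Everything else — boundedness itself, the upper bound, and the lower bound over finite $F$ passing to the full supremum — is essentially bookkeeping once the partition identity is in place.
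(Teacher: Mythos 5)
The paper does not actually prove this proposition; it is quoted verbatim from \cite[Proposition 2.3]{GrPa}, so I can only measure your argument against the standard one. Your partition identity, the whole of part $(a)$, the upper bounds in $(b)$ and $(c)$, and the lower-bound test family in $(c)$ are all correct and are exactly what one expects (modulo the small remark that for $c_0$ one should also note that $B$ maps $c_0(V,\mu)$ into itself, e.g.\ because it preserves the dense subspace of finitely supported functions).

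There is, however, a genuine error in the lower bound for part $(b)$. Equality in H\"older's inequality $\sum_{u}|f(u)\mu_u|\,|\mu_v/\mu_u|\le\big(\sum_u|f(u)\mu_u|^p\big)^{1/p}\big(\sum_u|\mu_v/\mu_u|^{p^\ast}\big)^{1/p^\ast}$ is a condition on \emph{moduli}, namely $|f(u)\mu_u|^p\propto|\mu_u|^{-p^\ast}$, i.e.\ $|f(u)|\propto|\mu_u|^{-p^\ast}$; choosing unimodular phases $\alpha_u$ only turns the triangle inequality $|\sum_u f(u)|\le\sum_u|f(u)|$ into an equality and has no bearing on the H\"older step. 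Your test function $f(u)=\alpha_u\,\overline{\mu_u}\,|\mu_u|^{-p^\ast}$ has $|f(u)|=|\mu_u|^{1-p^\ast}$, which is off by a factor $|\mu_u|$; a short computation (using $p-pp^\ast=-p^\ast$) shows it yields only
\begin{equation*}
\frac{|(Bf)(v)\mu_v|}{\|f\|_{p,\mu}}=|\mu_v|\,\frac{\sum_{u\in F}|\mu_u|^{1-p^\ast}}{\big(\sum_{u\in F}|\mu_u|^{p-p^\ast}\big)^{1/p}},
\end{equation*}
which by H\"older is $\le\big(\sum_{u\in F}|\mu_v/\mu_u|^{p^\ast}\big)^{1/p^\ast}$ with equality only when all $|\mu_u|$, $u\in F$, coincide. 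So on a vertex whose children carry weights of different moduli your family does not attain the claimed norm, and the inequality $\|B\|\ge C^{1/p^\ast}$ is not established. The fix is a one-line change: take $f=\sum_{u\in F}|\mu_u|^{-p^\ast}e_u$, which is already nonnegative (no phases needed at all); then $\|f\|_{p,\mu}^p=\sum_{u\in F}|\mu_u|^{p-pp^\ast}=\sum_{u\in F}|\mu_u|^{-p^\ast}$ and $|(Bf)(v)\mu_v|=|\mu_v|\sum_{u\in F}|\mu_u|^{-p^\ast}$, so the ratio is exactly $\big(\sum_{u\in F}|\mu_v/\mu_u|^{p^\ast}\big)^{1/p^\ast}$, and taking suprema over finite $F\subset\Chi(v)$ and over $v$ gives the desired lower bound. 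With that replacement the proof is complete.
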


 As mentioned previously, K. Grosse-Erdmann and D. Papathanasiou provided a comprehensive characterization of hypercyclicity for backward shift operators on weighted $\ell^p$-spaces and $c_0$-spaces. In the cases of rooted directed trees,  they obtained the following theorem, see \cite[Theorem 4.3]{GrPa}.
\begin{theorem}
 Let $(V, E)$ be a rooted directed tree and let $\mu=(\mu_v)_{v\in V}$ be a weight on $V$. Let $X=\ell^p(V,\mu)$, $1\leq p <+\infty$, or $X=c_0(V,\mu)$ and suppose  that the  backward shift $B$ is a bounded operator on $X$.  Then the following assertions are equivalent:
 \begin{enumerate}[label={$(\arabic*)$}]
\item $B$ is hypercyclic.
\item $B$ is weakly mixing.
 \item  There is an increasing sequence $(n_k)_{k\in\N}$ of positive integers such that, for each $v \in V$, we have 
 $$\begin{cases}
\underset{u \in \Chi^{n_k}(v)}{\sum}\dfrac{1}{|\mu_u|^{p^{\ast}}}\underset{k\to+\infty}{\longrightarrow}+\infty & \text{ if } X=\ell^p(V,\mu),\, 1< p<+\infty\\
 &\\
\underset{u\in\Chi^{n_{k}}(v)}{\inf}\,|\mu_u|\underset{k\to+\infty}{\longrightarrow}0 &\text{ if } X=\ell^1(V,\mu), \\
&\\
\underset{u \in \Chi^{n_k}(v)}{\sum}\dfrac{1}{|\mu_u|}\underset{k\to+\infty}{\longrightarrow}+\infty&\text{ if } X=c_0(V,\mu),
\end{cases}.$$
\end{enumerate}
\end{theorem}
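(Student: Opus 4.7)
The plan is to establish the cycle $(2)\Rightarrow(1)\Rightarrow(3)\Rightarrow(2)$; the implication $(2)\Rightarrow(1)$ is immediate from the definitions.

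For $(1)\Rightarrow(3)$, I fix a hypercyclic vector $f\in X$ and an exhaustion $F_{1}\subset F_{2}\subset\cdots$ of $V$ by finite sets with $\bigcup_{k}F_{k}=V$. Hypercyclicity provides a strictly increasing sequence $(n_{k})$ with $\|B^{n_{k}}f-\sum_{v\in F_{k}}e_{v}\|_{X}<1/k$. Coordinate evaluation $g\mapsto g(v)$ is continuous on $X$ with operator norm at most $1/|\mu_{v}|$, so for every $v\in V$ and every $k$ large enough that $v\in F_{k}$ we obtain $(B^{n_{k}}f)(v)\to1$; unfolding the definition of $B^{n_{k}}$ then yields $\sum_{u\in\Chi^{n_{k}}(v)}f(u)\to1$. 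The key structural observation is that, for a fixed $v$, the sets $\Chi^{n_{k}}(v)$ are pairwise disjoint in a rooted tree, being at distinct depths from $v$. Since $f$ belongs to $X$, this disjointness (together with $f\in c_{0}$ in the $c_{0}$ case) forces $\sum_{u\in\Chi^{n_{k}}(v)}|f(u)\mu_{u}|^{p}\to0$ in the $\ell^{p}$ cases and $\sup_{u\in\Chi^{n_{k}}(v)}|f(u)\mu_{u}|\to0$ in the $c_{0}$ case. Pairing $\sum f(u)\to1$ against this decay via Hölder's inequality (for $\ell^{p}$ with $1<p<\infty$ and for $c_{0}$) or via the crude bound $|\sum f(u)|\leq(\inf_{u}|\mu_{u}|)^{-1}\sum|f(u)\mu_{u}|$ (for $\ell^{1}$) delivers the three divergences in (3).

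For $(3)\Rightarrow(2)$, I apply the Hypercyclicity Criterion to the dense subspace $X_{0}=Y_{0}=\mathrm{span}\{e_{v}:v\in V\}$. Since each $v$ has finite depth in the rooted tree, $B^{n}e_{v}=0$ for $n$ larger than that depth, so $B^{n_{k}}x\to0$ on $X_{0}$ is automatic. The delicate part is building right inverses $S_{n_{k}}$ satisfying $B^{n_{k}}S_{n_{k}}e_{v}=e_{v}$ and $\|S_{n_{k}}e_{v}\|\to0$. I choose $S_{n_{k}}e_{v}$ to be supported on a finite subset of $\Chi^{n_{k}}(v)$ with coordinates summing to $1$; the disjointness $\Chi^{n_{k}}(v)\cap\Chi^{n_{k}}(w)=\emptyset$ for $v\neq w$ (valid in any rooted tree) then gives $B^{n_{k}}S_{n_{k}}e_{v}=e_{v}$. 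Minimizing the norm of such a function, over both the choice of finite support and the choice of coefficients, is a standard duality calculation: the infimum equals $(\sum_{u\in\Chi^{n_{k}}(v)}|\mu_{u}|^{-p^{\ast}})^{-1/p^{\ast}}$ on $\ell^{p}$ with $1<p<\infty$, equals $\inf_{u\in\Chi^{n_{k}}(v)}|\mu_{u}|$ on $\ell^{1}$ (attained by a single $e_{u}$), and equals $(\sum_{u\in\Chi^{n_{k}}(v)}|\mu_{u}|^{-1})^{-1}$ on $c_{0}$. By (3) each of these tends to $0$, so the criterion yields weak mixing.

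Conceptually, the main obstacle lies in the diagonalization in $(1)\Rightarrow(3)$: hypercyclicity readily supplies a sequence of powers for any single vertex, but (3) demands one sequence that controls every $v\in V$ simultaneously, and this upgrade is performed by the exhaustion of $V$ combined with the pointwise (coordinatewise) control furnished by the Banach space norm. The case-by-case duality computations in $(3)\Rightarrow(2)$ are otherwise routine.
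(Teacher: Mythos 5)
Your proof is correct. Note that the paper does not reprove this theorem (it is quoted from Grosse-Erdmann and Papathanasiou); the closest in-paper arguments are the proofs of Theorem \ref{FTransitive_rooted case} and Theorem \ref{recu_tree_shift}, and your argument matches their techniques very closely. Your sufficiency direction $(3)\Rightarrow(2)$ --- right inverses $S_{n_k}e_v$ supported in $\Chi^{n_k}(v)$ with unit coordinate sum, norms controlled by the duality computation of Lemma \ref{Lemma_Grosse}, combined with the Hypercyclicity Criterion --- is precisely how the paper builds the maps $S_k$ in the proof of Theorem \ref{FTransitive_rooted case}. For the necessity direction $(1)\Rightarrow(3)$ you diverge from that proof, which tests the orbit of a bounded set against weak neighbourhoods of a large multiple of $\sum_{v\in F}e_v$ and invokes the reverse H\"older inequality; instead you exploit that the sets $\Chi^{n_k}(v)$ are pairwise disjoint, so that $\sum_{u\in\Chi^{n_k}(v)}|f(u)\mu_u|^p$ is a tail of a convergent series and tends to $0$, and then apply the forward H\"older inequality. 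This is exactly the mechanism the paper uses in the step $(2)\Rightarrow(5)$ of Theorem \ref{recu_tree_shift}, and it buys a slightly more elementary argument (no reverse H\"older, no auxiliary constant $MN+1$), while the exhaustion $F_1\subset F_2\subset\cdots$ performs the diagonalization to a single sequence, as you point out. The technical points all check out: disjointness of the $\Chi^{n_k}(v)$ holds because depth is well defined in a rooted tree; condition $(3)$ forces $\Chi^{n_k}(v)\neq\emptyset$ for large $k$, so the $S_{n_k}$ are eventually well defined; and taking the approximants non-negative reconciles the constraint $\sum_u g(u)=1$ with the normalisation $\|\cdot\|_1=1$ in Lemma \ref{Lemma_Grosse}.
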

For the case of unrooted directed trees, we refer to \cite[Theorem 5.2]{GrPa}. 
In what follows, we will require the following lemma, which played a crucial role in establishing the results in \cite{GrPa}.

\begin{lemma}[\protect{\cite[Lemma 4.2]{GrPa}}]\label{Lemma_Grosse}
Let $J$ be a finite or countable set and let $\mu=(\mu_j)_{j\in J}\in(\K\setminus\lbrace0\rbrace)^{J}$. Then 
$$\underset{\|x\|_1=1}{\inf}\sum_{j\in J}|x_j\mu_j|=\underset{j\in J}{\inf}\,|\mu_j|,$$
$$\underset{\|x\|_1=1}{\inf}\Big(\sum_{j\in J}|x_j\mu_j|^p\Big)^{1/p}=\Big(\underset{j\in J}{\sum}\,\frac{1}{|\mu_j|^{p^\ast}}\Big)^{-1/p^{\ast}},\quad 1<p<+\infty, \text{ and } p^{\ast}=\frac{p}{p-1},$$
$$\underset{\|x\|_1=1}{\inf}\sup_{j\in J}|x_j\mu_j|=\Big(\underset{j\in J}{\sum}\,\frac{1}{|\mu_j|}\Big)^{-1},$$
where $x\in \K^J$, $\|x\|_{1}=\sum_{j\in J}|x_j|$ and $\infty^{-1}=0$. The same holds when the sequences $x$ are required, in addition, to be of finite support.
\end{lemma}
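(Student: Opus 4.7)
The plan is to establish all three identities by the same two-step template: derive the lower bound by an appropriate H\"older-type inequality, then realise it (or approximate it) with an explicit choice of $x$. The endpoint cases (1) and (3) serve as the linear case of this pattern, while (2) uses true H\"older duality between $p$ and $p^{\ast}$.

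For (1), the estimate $\sum_{j} |x_j \mu_j| \geq (\inf_j |\mu_j|) \sum_j |x_j| = \inf_j |\mu_j|$ is immediate, and taking $x = e_{j_0}$ for $j_0$ with $|\mu_{j_0}|$ close to the infimum shows it is sharp. For (3), from $|x_j| = |x_j \mu_j|\cdot |\mu_j|^{-1} \le (\sup_k |x_k \mu_k|)\, |\mu_j|^{-1}$, summing gives $1 = \|x\|_1 \leq (\sup_k |x_k \mu_k|) \sum_j 1/|\mu_j|$, which is the desired lower bound (interpreting $\infty^{-1}=0$). When $\sum_j 1/|\mu_j| < +\infty$ the choice $x_j = c/|\mu_j|$ with $c=(\sum_j 1/|\mu_j|)^{-1}$ equalises $|x_j \mu_j| = c$ and achieves the infimum; when the sum diverges, I truncate to finite $F \subset J$ with $\sum_{j \in F} 1/|\mu_j|$ arbitrarily large, apply the previous argument on $F$ to obtain a finitely supported $x$ with $\sup_j |x_j \mu_j|$ arbitrarily small, which simultaneously proves that the infimum is $0$ and that the restriction to finitely supported $x$ does not change the value.

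For (2), the key step is H\"older's inequality with exponents $p$ and $p^{\ast}$ applied to the factorisation $|x_j| = |x_j \mu_j|\cdot |\mu_j|^{-1}$:
\[
1 \;=\; \sum_j |x_j| \;\leq\; \Bigl(\sum_j |x_j \mu_j|^p\Bigr)^{1/p} \Bigl(\sum_j |\mu_j|^{-p^{\ast}}\Bigr)^{1/p^{\ast}},
\]
which yields the lower bound $(\sum_j 1/|\mu_j|^{p^{\ast}})^{-1/p^{\ast}}$. The equality case of H\"older forces $|x_j \mu_j|^p$ to be proportional to $|\mu_j|^{-p^{\ast}}$; using the identity $p^{\ast}/p + 1 = p^{\ast}$ (equivalently $p p^{\ast} = p + p^{\ast}$), this translates into $|x_j| = c/|\mu_j|^{p^{\ast}}$, and then the normalisation $\|x\|_1 = 1$ fixes $c = (\sum_j 1/|\mu_j|^{p^{\ast}})^{-1}$. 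A direct substitution, again using $p(p^{\ast}-1) = p^{\ast}$, produces the claimed value. As in (3), when $\sum_j 1/|\mu_j|^{p^{\ast}} = +\infty$ I truncate to a large finite subset to exhibit finitely supported $x$ making the $\ell^p$-norm arbitrarily small, which simultaneously proves the identity and the supplementary claim about finitely supported sequences.

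There is no genuine obstacle; the proof is essentially a H\"older computation. The only care needed is bookkeeping the conjugate exponent relation so that the equality case of H\"older produces exactly the right exponent $p^{\ast}$ in $|x_j| = c/|\mu_j|^{p^{\ast}}$, and handling the divergent case by truncation so that both the full-space and finite-support statements follow uniformly.
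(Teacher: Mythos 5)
Your proof is correct. Note that the paper itself contains no proof of this statement --- it is quoted as \cite[Lemma 4.2]{GrPa} --- and your argument (lower bound via H\"older with exponents $p$ and $p^{\ast}$ applied to the factorisation $|x_j|=|x_j\mu_j|\cdot|\mu_j|^{-1}$, then an explicit extremal sequence, with truncation in the divergent case) is the standard duality computation by which the cited result is proved. One small bookkeeping point: the finite-support refinement needs the truncation argument in the \emph{convergent} case too, since for infinite $J$ with $\sum_j|\mu_j|^{-p^{\ast}}<\infty$ the extremizer $|x_j|=c\,|\mu_j|^{-p^{\ast}}$ has infinite support; the same exhaustion by finite subsets $F\uparrow J$ that you use when the sum diverges yields finitely supported competitors whose values tend to the claimed infimum, so this is immediate to repair.
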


\subsection{Furstenberg families} \label{Fur_families} We devote this subsection to recalling the necessary definitions related to Furstenberg families. A non-empty family $\mathcal{F}$ of subsets of $\N_0$ is a \textit{Furstenberg family}, if for all $A \in \mathcal{F}$ it holds
\begin{itemize}
    \item $A$ is infinite; 
    \item  If $A \subset B \subset \N_0$, then $B\in\mathcal{F}$.
\end{itemize}
Let $\mathcal{F}\subset\mathcal{P}(\N_0)$ be a Furstenberg family. Following \cite{BMPP2}, we denote by $\mathcal{\widetilde{F}}$ (resp., $\mathcal{\widetilde{F}}_{+}$) the Furstenberg family consisting of subsets $A\subset \N_0$ such that  for every $N\in\N_0$, there exists  $B\in\mathcal{F}$  satisfying  
$$(B+[\![-N, N ]\!])\cap\N_0 \subset A \quad \text{(resp., } B+[\![0, N ]\!] \subset A).$$
It is clear that $\mathcal{\widetilde{F}}\subset \mathcal{\widetilde{F}}_{+}\subset \mathcal{F}$; thus, any $\mathcal{\widetilde{F}}$-transitive operator is $\mathcal{F}$-transitive. 
\smallskip

 A {\it filter} on $\mathbb{N}_0$ is a Furstenberg family $\mathcal{F}$ of subsets of $\mathbb{N}_0$ that satisfies the property $A \cap B \in \mathcal{F}$ whenever $A, B \in \mathcal{F}$. A nonempty collection $\mathcal{B}$ of subsets of $\mathbb{N}_0$ is called a {\it filter base} if the following conditions hold:
\begin{itemize}
\item Every set in $\mathcal{B}$ is infinite.
\item The intersection of any two sets in $\mathcal{B}$ contains a set in $\mathcal{B}$.
\end{itemize}
It is worth noting that every filter is a filter base. Conversely, if $\mathcal{B}$ is a filter base of subsets of $\mathbb{N}_0$, then the collection of sets
$$\mathcal{F}_{\mathcal{B}}=\{A\subset \N_0: B\subset  A \text{ for some } B\in \mathcal{B}\}$$
forms a filter, called the filter generated by $\mathcal{B}$.

\section{$\mathcal{F}$-Transitivity Criterion} \label{section3}

 In \cite[Theorem 2.4]{BMPP2}, Bès, Menet, Peris, and Puig established an $\mathcal{F}$-Transitivity Criterion. This criterion uses the concept of a limit along the family $\mathcal{F}$. Let $(x_n)_n$ be a sequence in $X$, and let $x \in X$. We say that
$$\mathcal{F}\text{-}\underset{n}{\lim}\,x_n=x,$$
if $\{n\in\N_0:\, x_n\in U\}\in\mathcal{F}$ holds for every neighborhood $U$ of $x$. In our analysis, we will require a variant of this criterion, combined with the Hypercyclicity Criterion \cite[Proposition 5.1]{GrPa}. For the sake of completeness, we will include its proof, which is an adaptation of the proof provided in \cite[Theorem 2.4]{BMPP2}.
\begin{theorem}[$\mathcal{F}$-Transitivity Criterion]\label{Trans_Crite}
Let $X$ be an infinite-dimensional Banach space, $T\in \mathcal{L}(X)$ and let $\mathcal{F}$ be a Furstenberg family on $\N_0$ such that $\widetilde{\mathcal{F}}$ is a filter. Suppose that there exist two dense subsets $X_0$ and $Y_0$ in $X$ and maps $I_n:X_0\to X$ and   $S_n: Y_0\to X$, $n\in \N_0$, such that, for any $x\in X_0$ and any $y\in Y_0$, the following conditions hold:
\begin{enumerate}[label=$(\arabic*)$]
    \item $\mathcal{F}\text{-}\underset{n}{\lim}\,(I_nx,T^nI_nx)=(x,0)$;
    \item $\mathcal{F}\text{-}\underset{n}{\lim}\,(S_ny,T^nS_ny)=(0,y)$.
\end{enumerate}
Then $T$ is $\widetilde{\mathcal{F}}$-transitive. 
\end{theorem}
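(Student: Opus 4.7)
The plan is to verify that for every pair of nonempty open sets $U, V \subset X$, the return set $N_T(U,V)$ belongs to $\widetilde{\mathcal{F}}$. The argument follows the classical template of an $\mathcal{F}$-transitivity criterion, based on the ``witness'' construction $z_n := I_n x + S_n y$ built from the two dense subsets and the two families of maps provided by the hypothesis.

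First, I would fix nonempty open $U, V \subset X$ and, using density of $X_0$ and $Y_0$, pick $x \in X_0 \cap U$ and $y \in Y_0 \cap V$, together with $\varepsilon > 0$ such that $B(x, 2\varepsilon) \subset U$ and $B(y, 2\varepsilon) \subset V$. Applying hypothesis $(1)$ to the neighborhood $B(x, \varepsilon) \times B(0, \varepsilon)$ of $(x, 0)$ in $X \times X$, and hypothesis $(2)$ to the neighborhood $B(0, \varepsilon) \times B(y, \varepsilon)$ of $(0, y)$, produces sets
\begin{align*}
A_1(\varepsilon) &:= \{n \in \N_0 : \|I_n x - x\| < \varepsilon \text{ and } \|T^n I_n x\| < \varepsilon\} \in \mathcal{F},\\
A_2(\varepsilon) &:= \{n \in \N_0 : \|S_n y\| < \varepsilon \text{ and } \|T^n S_n y - y\| < \varepsilon\} \in \mathcal{F}.
\end{align*}
For $n \in A_1(\varepsilon) \cap A_2(\varepsilon)$, two applications of the triangle inequality give $\|z_n - x\| < 2\varepsilon$ and $\|T^n z_n - y\| < 2\varepsilon$, so $z_n \in U$ and $T^n z_n \in V$, i.e.\ $n \in N_T(U, V)$. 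Hence $A_1(\varepsilon) \cap A_2(\varepsilon) \subset N_T(U, V)$ for every $\varepsilon > 0$.

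The key remaining step is to upgrade this $\mathcal{F}$-level inclusion into membership of $N_T(U,V)$ in the smaller family $\widetilde{\mathcal{F}}$. As $\varepsilon$ ranges through a decreasing positive sequence, the collection $\{A_1(\varepsilon) \cap A_2(\varepsilon)\}_{\varepsilon > 0}$ forms a filter base of $\mathcal{F}$-sets, each contained in $N_T(U, V)$. Invoking the filter hypothesis on $\widetilde{\mathcal{F}}$, together with the shift-robustness encoded in its definition through the windows $[\![-N, N]\!]$, one verifies that the filter generated by this base lies inside $\widetilde{\mathcal{F}}$, so that $N_T(U, V) \in \widetilde{\mathcal{F}}$ by upward closure of Furstenberg families.

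I expect this final bridging step --- promoting pointwise $\mathcal{F}$-membership of the witness sets into the robust $\widetilde{\mathcal{F}}$-membership of $N_T(U,V)$ --- to be the main obstacle. It is the precise point where both the filter structure of $\widetilde{\mathcal{F}}$ and the translation window built into its definition are used in an essential way, and it faithfully follows the template of \cite[Theorem 2.4]{BMPP2}.
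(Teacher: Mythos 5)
There are two genuine gaps here, and they sit exactly at the points your proposal flags as ``the key remaining step'' but does not actually carry out. First, the set $A_1(\varepsilon)\cap A_2(\varepsilon)$ need not belong to $\mathcal{F}$ --- it can even be empty --- because the hypothesis only makes $\widetilde{\mathcal{F}}$ a filter, not $\mathcal{F}$ itself. (Take $\mathcal{F}$ to be the family of infinite subsets of $\N_0$: the good times for $x$ could be the even integers and the good times for $y$ the odd ones.) So the witness $z_n=I_nx+S_ny$ cannot be run at a common time $n$ without further work. The paper's proof is structured precisely to avoid this: it writes $U\supset U'+W$ and $V\supset V'+W$ for a $0$-neighbourhood $W$, uses $N(U,V)\supset N(U',W)\cap N(W,V')$, proves that each of $N(U',W)$ and $N(W,V')$ lies in $\widetilde{\mathcal{F}}$ using only \emph{one} of the two hypotheses, and only then intersects --- inside the filter $\widetilde{\mathcal{F}}$, where intersection is legitimate.

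Second, even if the intersection were an $\mathcal{F}$-set, exhibiting a single $\mathcal{F}$-subset of $N_T(U,V)$ only yields $N_T(U,V)\in\mathcal{F}$, not $N_T(U,V)\in\widetilde{\mathcal{F}}$: the latter requires, for every $N$, some $B\in\mathcal{F}$ with $(B+[\![-N,N]\!])\cap\N_0\subset N_T(U,V)$, and your sets $A_i(\varepsilon)$ carry no translation-robustness whatsoever. The appeal to ``the shift-robustness encoded in the definition'' is an assertion, not an argument; upward closure of Furstenberg families gives nothing here. The paper manufactures the needed robustness by applying hypothesis $(1)$ to the neighbourhood $T^{-N}U'\times\bigcap_{k=0}^{M}T^{-k}W$ of $(x,0)$ (and the analogue for $(2)$), so that a good time $n$ remains good after shifting by any $k\in[\![0,M]\!]$; this is where continuity of $T$, the pulled-back open sets $T^{-N}U'$, $T^{-N}V'$ (which requires first observing that $T$ has dense range, a consequence of $(2)$ that your argument never uses), and the intermediate family $\widetilde{\mathcal{F}}_{+}$ all enter. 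Without some version of this pull-back device your argument cannot reach $\widetilde{\mathcal{F}}$-transitivity, only $\mathcal{F}$-transitivity at best --- and, because of the first gap, not even that.
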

\begin{proof}
Note that, according to condition $(2)$,   $T$ has a dense range in $X$. Now, let $U,V$ be nonempty open subsets of $X$. There exist nonempty open subsets $U',V'$ of $X$ and a $0$-neighbourhood $W$ such that
$$U'+W\subset U\quad \text{and}\quad V'+W\subset V.$$
Since $\widetilde{\mathcal{F}}$ is a filter and
$$N(U,V) \supset N(U'+W,V'+W)\supset N(U',W)\cap N(W,V'),$$
it is enough to show that $N(U',W)\in \widetilde{\mathcal{F}}$ and $N(W,V')\in \widetilde{\mathcal{F}}$.
Let $N\in\N_0$. Since $T$ has a dense range, the sets $T^{-N}U'$ and $T^{-N}V'$ are nonempty open sets. Choose  $x\in X_0\cap T^{-N}U'$ and $y\in Y_0\cap T^{-N}V'$. We start by showing that $N(T^{-N}U',W)\in \widetilde{\mathcal{F}}_{+}$. For any $M\in\N_0$, by $(1)$, there exists $A_M\in \mathcal{F}$ such that, for every $n\in A_M$,
$$I_nx\in T^{-N}U' \quad \text{and} \quad T^nI_nx\in  \bigcap_{k=0}^{M}T^{-k}W.$$
Note that, for any $n\in A_M+[\![0,M]\!]$, there exists $k\in [\![0,M]\!]$ such that 
$n-k\in A_M$, hence
$$I_{n-k}x \in T^{-N}U' \quad\text{and} \quad T^{n-k}I_{n-k}x\in  T^{-k}W,$$
therefore, $I_{n-k}x \in T^{-N}U'$ and  $T^{n}I_{n-k}x\in  W$, so $n\in N(T^{-N}U',W)$. Thus $A_M+[\![0,M]\!]\subset N(T^{-N}U',W)$ holds for any arbitrarily $M\in\N_0$. Consequently, $N(T^{-N}U',W) \in \widetilde{\mathcal{F}}_{+}$. Thus, there is $B\in\mathcal{F}$ such that $B+[\![0,2N]\!]\subset N(T^{-N}U',W)$. Therefore,
$$(B +[\![-N,N]\!])\cap\N_0 \subset (N(T^{-N}U',W)-N)\cap \N_0 \subset N(U',W),$$
and, since $N$ was arbitrary, we deduce  that $N(U',W)\in \widetilde{\mathcal{F}}$.

 On the other hand, by $(2)$, there is $B\in\mathcal{F}$ such that, for every $n\in B$,
$$S_n y\in \bigcap_{k=0}^{2N}T^{-k}W \quad\text{and}\quad T^n S_n(y)\in T^{-N}V'.$$
In particular, for any $n\in(B+[\![-N,N]\!])\cap \N_0$, there exists  $k\in[\![-N,N]\!]$ such that $n-k\in B$ and $N-k\in[\![0,2N]\!]$, thus 
$$S_{n-k}y\in T^{k-N}W  \quad \text{and} \quad T^{n-k}S_{n-k}y\in T^{-N}V',$$
hence
$$T^{N-k}S_{n-k}y\in W  \quad \text{and} \quad T^{n}(T^{N-k}S_{n-k}y)\in T^{-N}V',$$
 from which we deduce that $n\in N(W,V')$. Therefore, 
$$(B+[\![-N,N]\!])\cap \N_0 \subset N(W,V').$$
Since $N$ was arbitrary, we obtain that $N(W,V')\in\widetilde{\mathcal{F}}$.
This finishes the proof.
\end{proof}

\section{$\mathcal{F}$-transitivity - rooted case} \label{section4} 
Our first main result provides a characterization of $\mathcal{F}$-transitivity for backward shifts on rooted directed trees and establishes its equivalence with certain weak properties.

\begin{theorem}\label{FTransitive_rooted case}
 Let $(V,E)$ be a rooted directed tree, let $\mu=(\mu_v)_{v\in V}$ be a weight on $V$, and let $\mathcal{F}$ be a Furstenberg family on $\N_0$. Let $X=\ell^p(V,\mu)$, $1\leq p <+\infty$, or $X=c_0(V,\mu)$ and suppose  that the  backward shift $B$ is a bounded operator on $X$. The following assertions are equivalent: 
 \begin{enumerate}[label={$(\arabic*)$}]
  \item $B$ is $\widetilde{\mathcal{F}}$-transitive.
 \item $B$ is $\mathcal{F}$-transitive.
 \item There exists a bounded subset $C\subset X\setminus \{0\}$ such that for every nonempty weak open subset $W\subset X$, $N(C,W)\in\mathcal{F}$.
 \item For every $N\in\N$ and every finite subset $F\subset V$, we have
 $$\bigcap_{v\in F}\Big\{n\in\N_0:\, \underset{u\in\Chi^n(v)}{\sup}\dfrac{1}{|\mu_u|}>N\Big\}\in\mathcal{F},\quad \text{if } X=\ell^1(V,\mu);$$
 
  $$\bigcap_{v\in F}\Big\{n\in\N_0:\, \Big(\underset{u\in\Chi^n(v)}{\sum}\dfrac{1}{|\mu_u|^{p^\ast}}\Big)^{1/p^\ast}>N\Big\}\in\mathcal{F},\quad \text{if }X=\ell^p(V,\mu),\, 1<p<+\infty;$$
  
   $$\bigcap_{v\in F}\Big\{n\in\N_0:\, \underset{u\in\Chi^n(v)}{\sum}\dfrac{1}{|\mu_u|}>N\Big\}\in\mathcal{F},\quad \text{if }X=c_0(V,\mu).$$
 \end{enumerate}
If $\mathcal{F}$ satisfies that $A\cap [n,+\infty[ \in\mathcal{F}$ whenever $A\in\mathcal{F}$ and $n\in\N$, then the above conditions are equivalent to
\begin{enumerate}
    \item[$(5)$] For every $g\in X$, for every open neighborhood $U$ of $g$, for every weakly open neighborhood $W$ of $g$, $N(U,W)\in\mathcal{F}$.
\end{enumerate} 
If $\mathcal{F}$ is a filter,  then the conditions $(1)\text{-}(4)$ are equivalent to
\begin{enumerate}
    \item[$(6)$] For every $N\in\N$ and every $v\in V$, we have
 $$\Big\{n\in\N_0:\, \underset{u\in\Chi^n(v)}{\sup}\dfrac{1}{|\mu_u|}>N\Big\}\in\mathcal{F},\quad \text{if } X=\ell^1(V,\mu);$$
 
  $$\Big\{n\in\N_0:\, \Big(\underset{u\in\Chi^n(v)}{\sum}\dfrac{1}{|\mu_u|^{p^\ast}}\Big)^{1/p^\ast}>N\Big\}\in\mathcal{F},\quad \text{if }X=\ell^p(V,\mu),\, 1<p<+\infty;$$
  
   $$\Big\{n\in\N_0:\, \underset{u\in\Chi^n(v)}{\sum}\dfrac{1}{|\mu_u|}>N\Big\}\in\mathcal{F},\quad \text{if }X=c_0(V,\mu).$$
\end{enumerate}
\end{theorem}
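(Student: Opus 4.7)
The plan is to establish the main cycle $(1)\Rightarrow(2)\Rightarrow(3)\Rightarrow(4)\Rightarrow(1)$ and then to deduce the additional equivalences with $(5)$ and $(6)$ under the stated hypotheses on $\mathcal{F}$. The first two links are straightforward: $(1)\Rightarrow(2)$ is immediate from $\widetilde{\mathcal{F}}\subset\mathcal{F}$, and $(2)\Rightarrow(3)$ follows by taking $C$ to be any bounded nonempty norm-open subset of $X\setminus\{0\}$ (for instance $B(x_0,r)$ with $\|x_0\|>r>0$) and noting that every nonempty weakly open set is a fortiori norm-open.

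The main computational step is $(3)\Rightarrow(4)$. Fix a finite $F\subset V$ and $N\in\mathbb{N}$, let $M:=\sup_{f\in C}\|f\|_X$, and for $v\in V$, $n\in\mathbb{N}_0$ let $D_n(v)$ denote the quantity appearing in $(4)$ (the supremum of $1/|\mu_u|$ over $\Chi^n(v)$ for $\ell^1$, the $\ell^{p^{\ast}}$-sum for $\ell^p$ with $1<p<+\infty$, or the $\ell^1$-sum for $c_0$). Each coordinate functional $g\mapsto\langle g,e_v\rangle=g(v)$ is continuous on $X$, so
$$W:=\bigcap_{v\in F}\{g\in X:\,|g(v)|>NM\}$$
is weakly open and nonempty. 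By $(3)$, $N(C,W)\in\mathcal{F}$; for each $n$ in this set, picking $f\in C$ with $B^nf\in W$ gives $NM<|(B^nf)(v)|=\bigl|\sum_{u\in\Chi^n(v)}f(u)\bigr|$ for every $v\in F$. Factoring $f(u)=(f(u)\mu_u)\cdot\mu_u^{-1}$ and applying H\"older's inequality (or the trivial $\ell^1$ and $c_0$ analogues) bounds the last expression by $\|f\|_X\cdot D_n(v)\leq M\cdot D_n(v)$, forcing $D_n(v)>N$; upward closure of $\mathcal{F}$ then yields $(4)$.

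The second substantive step is $(4)\Rightarrow(1)$, via the $\mathcal{F}$-transitivity criterion (Theorem \ref{Trans_Crite}) with $X_0=Y_0=\mathrm{span}\{e_v:v\in V\}$ and $I_n=\mathrm{id}$. For $y=\sum_{v\in F_y}\alpha_v e_v\in Y_0$, set $S_ny:=\sum_{v\in F_y}\alpha_v h_n^{(v)}$, where Lemma \ref{Lemma_Grosse} supplies $h_n^{(v)}$ as the minimum-$X$-norm element supported on $\Chi^n(v)$ with $\sum_{u\in\Chi^n(v)}h_n^{(v)}(u)=1$; then $B^n h_n^{(v)}=e_v$, hence $B^n S_n y=y$, and $\|h_n^{(v)}\|_X=1/D_n(v)$, so $(4)$ ensures $\|S_n y\|_X\to 0$ along an $\mathcal{F}$-set, verifying item $(2)$ of the criterion. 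Item $(1)$ holds for free because rootedness forces $B^n x=0$ once $n$ exceeds the maximum depth of $\supp(x)$. The main obstacle here is ensuring that the resulting $\mathcal{F}$-limits combine into $\widetilde{\mathcal{F}}$-transitivity rather than mere $\mathcal{F}$-transitivity; this is exactly what Theorem \ref{Trans_Crite} achieves when $\widetilde{\mathcal{F}}$ is a filter, and in general one assembles $f_n:=I_n x+S_n y$ directly and reads off $N(U,V)\in\widetilde{\mathcal{F}}$ from the construction.

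Finally, for the additional clauses: $(2)\Rightarrow(5)$ is immediate since every weakly open set is norm-open; for $(5)\Rightarrow(4)$ under tail-closure, we centre the previous argument at a finitely supported $g:=\sum_{v\in F}\beta_v e_v$ with $\beta_v>Nr+\epsilon$ for chosen $r,\epsilon>0$, take $U:=B(g,r)$ and $W:=\{h:|h(v)-\beta_v|<\epsilon,\,v\in F\}$, and use tail-closure to intersect $N(U,W)$ with a tail $[n_0,+\infty)$ where $n_0$ exceeds the maximum depth of $\supp g$; for $n$ in this intersection $B^n g=0$, so $(B^n f)(v)=(B^n(f-g))(v)$, and the H\"older bound applied to $f-g$ (of norm $<r$) delivers $D_n(v)>N$. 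For $(4)\Leftrightarrow(6)$ under the filter hypothesis on $\mathcal{F}$, the direction $(4)\Rightarrow(6)$ is trivial by taking $F=\{v\}$, while $(6)\Rightarrow(4)$ uses closure of $\mathcal{F}$ under finite intersections.
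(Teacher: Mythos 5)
Most of your argument tracks the paper closely and is sound: the implications $(1)\Rightarrow(2)$, $(2)\Rightarrow(3)$, $(2)\Rightarrow(5)$, the H\"older computation for $(3)\Rightarrow(4)$ (your weakly open set $\bigcap_{v\in F}\{g:|g(v)|>NM\}$ plays the same role as the paper's weak neighbourhood of $(MN+1)\sum_{u\in F}e_u$), the tail argument for $(5)\Rightarrow(4)$, and $(4)\Leftrightarrow(6)$ under the filter hypothesis are all essentially the paper's proof.

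The genuine gap is in $(4)\Rightarrow(1)$. Theorem \ref{Trans_Crite} is only stated (and only provable) for families such that $\widetilde{\mathcal{F}}$ is a filter: its proof must intersect $N(U',W)$ with $N(W,V')$. In the present theorem $\mathcal{F}$ is an \emph{arbitrary} Furstenberg family, so you cannot invoke the criterion for $\mathcal{F}$ itself, and you acknowledge this. Your fallback --- ``assemble $f_n:=I_nx+S_ny$ directly and read off $N(U,V)\in\widetilde{\mathcal{F}}$'' --- does not work as stated: the direct assembly shows $N(U,V)\supset I(F,N'')\cap[n_0,+\infty[$ for a suitable set $I(F,N'')$ supplied by $(4)$, which gives $N(U,V)\in\mathcal{F}$ (i.e.\ condition $(2)$), but to get $N(U,V)\in\widetilde{\mathcal{F}}$ you need, for every $N$, some $A\in\mathcal{F}$ with $(A+[\![-N,N]\!])\cap\N_0\subset N(U,V)$, and membership of $m$ in $I(F,N'')$ gives no control whatsoever over $m+j$ for $j\in[\![-N,N]\!]$. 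The padding trick that produces this control is precisely the part of the criterion's proof that uses the filter property of $\widetilde{\mathcal{F}}$. The paper closes this in two steps that your proposal omits: first, it applies the criterion not to $\mathcal{F}$ but to the filter $\mathcal{F}_{\mathcal{B}}$ generated by the sets $I(F,N)$ (all of which lie in $\mathcal{F}$ by hypothesis $(4)$); since $\mathcal{F}_{\mathcal{B}}$ is a filter, so is $\widetilde{\mathcal{F}_{\mathcal{B}}}$, the criterion applies, and $\mathcal{F}_{\mathcal{B}}\subset\mathcal{F}$ yields $(2)$. Second, $(2)\Rightarrow(1)$ is obtained by a completely different route: a hypercyclic backward shift on a rooted tree is weakly mixing by \cite[Theorem 4.3]{GrPa}, and an operator that is both $\mathcal{F}$-transitive and weakly mixing is $\widetilde{\mathcal{F}}$-transitive by \cite[Lemma 2.3]{BMPP2}. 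You need one of these two devices (the generated filter, or the weak-mixing upgrade) to complete the cycle; as written, your argument only establishes $(4)\Rightarrow(2)$.
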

\begin{proof}
We will only prove the equivalences in the case where $X = \ell^p(V,\mu)$ with $1 < p < +\infty$. A similar argument can be made to deduce the cases where $X = \ell^1(V,\mu)$ and $X = c_0(V,\mu)$. It is clear that  $(2)\Rightarrow(3)$ and  $(2)\Rightarrow(5)$. Moreover, $(1)\Rightarrow(2)$ since $\widetilde{\mathcal{F}} \subset \mathcal{F}$. Suppose that $B$ is $\mathcal{F}$-transitive, thus it is transitive (equivalently, hypercyclic). Using \cite[Theorem 4.3]{GrPa}, we can conclude that $B$ is weakly mixing. Thus, $B$ is both $\mathcal{F}$-transitive and weakly mixing, which is equivalent to saying that $B$ is $\widetilde{\mathcal{F}}$-transitive, according to \cite[Lemma 2.3]{BMPP2}. Hence, $(2)\Rightarrow(1)$. Moreover, it clear that $(4)\Leftrightarrow (6)$ when $\mathcal{F}$ is a filter.

\smallskip

  $(3)\Rightarrow (4)$.  Let  $C$ be a bounded subset of $\ell^p(V,\mu)\setminus\{0\}$ such that $N(W,C)\in\mathcal{F}$, for every noempty weak open subset $W$ of $\ell^p(V,\mu)$.  Set $M:=\sup\{\|f\|_{p,\mu}:\, f\in C\}<+\infty$. Let $N\in\N$ and let $F\subset V$ be a finite subset. Then
$$W=\{f\in\ell^p(V,\mu):\, |\langle f-(MN+1)\sum_{u\in F}e_u,e_v\rangle|<1,\,\forall v\in F\}$$
is a weakly open neighborhood of $(MN+1)\sum_{u\in F}e_u$. By the hypothesis, $N(C,W)\in\mathcal{F}$. Since $\mathcal{F}$ is a Furstenberg family, we need only to prove  that 
\begin{equation}
  N(C,W)\subset \bigcap_{v\in F}\Big\{n\in\N_0:\, \Big(\underset{u\in\Chi^n(v)}{\sum}\dfrac{1}{|\mu_u|^{p^\ast}}\Big)^{1/p^\ast}>N\Big\}. 
  \label{eqq2}
\end{equation}
Let $n\in N(C,W)$. There exists then $f\in C$ such that $B^{n}f\in W$, thus for every $v\in F$
$$|(B^{n}f)(v)-(MN+1)|<1,$$
hence,  by using  Hölder's inequality, we obtain
\begin{align*}
   MN&<\sum_{u\in\Chi^{n}(v)}|f(u)|\\
   &\leq \Big( \sum_{u\in\Chi^{n}(v)}|f(u)\mu_u|^p\Big)^{1/p}\Big( \sum_{u\in\Chi^{n}(v)}\dfrac{1}{|\mu_u|^{p^\ast}}\Big)^{1/p^\ast}\\
   &\leq \|f\|_{p,\mu}\Big( \sum_{u\in\Chi^{n}(v)}\dfrac{1}{|\mu_u|^{p^\ast}}\Big)^{1/p^\ast}\\
   &\leq M\,\Big( \sum_{u\in\Chi^{n}(v)}\dfrac{1}{|\mu_u|^{p^\ast}}\Big)^{1/p^\ast}\\
\end{align*}
therefore
$$\Big( \sum_{u\in\Chi^{n}(v)}\dfrac{1}{|\mu_u|^{p^\ast}}\Big)^{1/p^\ast}>N,\quad \forall v\in F,$$
and so $\eqref{eqq2}$ holds.

\smallskip

Let us show that $(4)\Rightarrow(2)$. Assume that $(4)$ holds, that is, for every $N>0$ and every finite subset $F\subset V$, we have
   $$I(F,N):= \bigcap_{v\in F}\Big\{n\in\N_0:\, \Big(\underset{u\in\Chi^n(v)}{\sum}\dfrac{1}{|\mu_u|^{p^\ast}}\Big)^{1/p^\ast}>N\Big\}\in\mathcal{F}.$$
   Let $\mathcal{B}$ be the filter base consisting of all subsets $I(F,N)$ of $\N_0$, where  $N>0$ and  $F\subset V$ is finite. We define
   $$\mathcal{F}_{\mathcal{B}}=\{A\subset \N_0: B\subset  A \text{ for some } B\in \mathcal{B}\}.$$ 
   In other words, $\mathcal{F}_\mathcal{B}$ is the filter generated by $\mathcal{B}$. We will show that $B$ satisfies the $\mathcal{F}_{\mathcal{B}}$-transitivity Criterion, which implies that $B$ is $\mathcal{F}$-transitive since $\mathcal{F}_{\mathcal{B}}\subset \mathcal{F}$.
   Set $\mathcal{D}=\mathrm{span}\{e_v:\, v \in V\}$. Note that $\mathcal{D}$ is dense in $\ell^p(V,\mu)$. Let $f=\sum_{u\in F}f(u) e_u\in \mathcal{D}$, with $F\subset V$ finite. Let $\varepsilon>0$ and $\mathcal{U}:=B(0,\varepsilon)$ be the open ball of center $0$ and radius $\varepsilon$ in $\ell^p(V,\mu)$. 
   Let us start by proving that
$$N(f,\mathcal{U}):=\{n\in\N_0:\, B^{n}f\in \mathcal{U}\}\in \mathcal{F}_\mathcal{B}.$$
To achieve this, it is enough to find a number $N\geq1$ such that $I(F,N) \subset N(f,\mathcal{U})$. First, select an integer $n_0\in \mathbb{N}$ large enough such that $\Chi^{n}(V)\cap F=\emptyset$ for every integer $n\geq n_0$. Then, we set
$$N:=1+\underset{v\in F}{\max}\,\underset{0\leq k<n_0}{\max}\Big(\underset{u\in\Chi^k(v)}{\sum}\dfrac{1}{|\mu_u|^{p^\ast}}\Big)^{1/p^\ast}.$$
Note that, by using Proposition \ref{Bounded_Back} and the boundedness of $B^{k}$ for every $0\leq k<n_0$, $N$ is finite. Now, suppose that $n\in I(F,N)$. This implies that $n\geq n_0$. Thus, for every $v\in V$, we have $\Chi^n(v)\cap F=\emptyset$, hence, $(B^{n} f)(v)=0$. This implies that $n\in N(f,\mathcal{U})$. Therefore, we have shown that $I(F,N) \subset N(f,\mathcal{U})$, and consequently, $N(f,\mathcal{U})\in\mathcal{F}_\mathcal{B}$ since $\mathcal{F}_\mathcal{B}$ is a Furstenberg family. Hence, the first condition of the $\mathcal{F}_\mathcal{B}$-transitivity criterion holds, where the maps $I_n$ correspond to the identity map.

Now, for every $v\in V$, let $(\delta_{v,n})_n$ be a decreasing sequence of positive real numbers tending to zero.  For all $v\in V$ and $n\in\N_0$, by Lemma \ref{Lemma_Grosse}, we have
$$\underset{\|x\|_1=1}{\inf}\Big(\underset{u \in \Chi^{n}(v)}{\sum}|x_u\mu_u|^p\Big)^{1/p}=\Big(\underset{u \in \Chi^{n}(v)}{\sum}\,\frac{1}{|\mu_u|^{p^\ast}}\Big)^{-1/p^{\ast}}.$$
There exists then $g_{v,n}\in\K^V$ non-negative, of support in $\Chi^{n}(v)$, such that 
\begin{equation}\label{rooted:eqq1} 
    \sum_{u\in \Chi^{n}(v)}g_{v,n}(u)=1 \quad \text{and} \quad \Big(\sum_{u\in \Chi^{n}(v)}|g_{v,n}(u)\,\mu_u|^p\Big)^{1/p}<\Big(\underset{u \in \Chi^{n}(v)}{\sum}\,\frac{1}{|\mu_u|^{p^\ast}}\Big)^{-1/p^{\ast}}+\delta_{v,n}.
\end{equation}
Note that,  $g_{v,n}\in \ell^p(V,\mu)$. Let us define linearly the maps $S_k:\mathcal{D}\to \ell^p(V,\mu)$, for $k\geq 0$, by
$$S_ke_v=g_{v,k},\quad \forall v\in V.$$
Let $\varepsilon>0$ and $g=\sum_{u\in F} g(a) e_a\in\mathcal{D}$, where $F$ is the support of  $g$. Set again $\mathcal{U}:=B(0,\varepsilon)$ 
and $\mathcal{V}:=B(g,\varepsilon)$. 
We want to show that 
$$A:=\{n\in\N_0:\, (S_ng,B^nS_ng)\in \mathcal{U}\times \mathcal{V}\}\in\mathcal{F}_{\mathcal{B}}.$$
Let $n_0\in \N$ be large enough so that $\frac{\varepsilon}{|g(a)| |F|}-\delta_{a,n_0}>0$, for every $a\in F$  (where $|F|$ stands for the cardinal of the finite set $F$). Set
$$N=\max\Bigg\{1+\underset{v\in F}{\max}\,\underset{0\leq k<n_0}{\max}\Big(\underset{u\in\Chi^k(v)}{\sum}\dfrac{1}{|\mu_u|^{p^\ast}}\Big)^{1/p^\ast},  \dfrac{1}{ \underset{a\in F}{\min}\big\{\frac{\varepsilon}{|g(a)| |F|}-\delta_{a,n_0}\big\}} \Bigg\}.$$
We will show that $I(F,N)\subset A$.
Note that $B^nS_ne_v=B^ng_{v,n}=e_v$, for every integer $n\geq0$. Indeed, by \eqref{rooted:eqq1}, we have
\begin{align*}
    (B^ng_{v,n})(a)&=\sum_{b\in \Chi^n(a)}g_{v,n}(b)\\
    &=\begin{cases}
        1 & \text{ if } a=v \\
        0 & \text{ if } a\neq v
    \end{cases}\\
    &=e_v(a).
\end{align*}
Thus $B^nS_ng=g\in \mathcal{V}$, for every integer $n\geq0$. Let $n\in I(F,N)$, so as above and by the definition of $N$, we have $n\geq n_0$ and
$$\Big(\underset{u \in \Chi^{n}(a)}{\sum}\,\frac{1}{|\mu_u|^{p^\ast}}\Big)^{-1/p^{\ast}}<\dfrac{1}{N}, \, \forall a\in F,$$
thus by using \eqref{rooted:eqq1}, we obtain:
$$\Big(\sum_{u\in \Chi^{n}(a)}|g_{a,n}(u)\,\mu_u|^p\Big)^{1/p}<\dfrac{1}{N}+\delta_{a,n}, \, \forall a\in F,$$
by using again the definition  of $N$, we deduce 
$$\Big(\sum_{u\in \Chi^{n}(a)}|g_{a,n}(u)\,\mu_u|^p\Big)^{1/p}<\dfrac{\varepsilon}{|g(a)| |F|}, \, \forall a\in F,$$
therefore 
\begin{align*}
    \|S_ng\|_{p,\mu} \leq \sum_{a\in F} |g(a)| \|g_{a,n}\|_{p,\mu}
    =\sum_{a\in F} |g(a)| \Big( \sum_{b\in\Chi^n(a)}|g_{a,n}(b)\mu_b|^p\Big)^{1/p}
    <\varepsilon,
\end{align*}
hence $S_ng\in \mathcal{U}$. We have shown that $n\in A$ and so $I(F,N)\subset A$. Consequently, $A\in\mathcal{F}_\mathcal{B}$. This provides the second condition of the $\mathcal{F}_\mathcal{B}$-transitivity Criterion. Hence $B$ is $\mathcal{F}$-transitive and statement $(2)$ holds.

\smallskip

$(5)\Rightarrow(4)$. Let us assume that $(5)$ holds.
 Let $F$ be a finite subset of $V$  and let $N\in\N$. We want to show that
  $$I(F,N):= \bigcap_{v\in F}\Big\{n\in\N_0:\, \Big(\underset{u\in\Chi^n(v)}{\sum}\dfrac{1}{|\mu_u|^{p^\ast}}\Big)^{1/p^\ast}>N\Big\}\in\mathcal{F}.$$
 Set $g=\sum_{v\in F} e_v\in\ell^p(V,\mu)$. Let $U=\{f\in\ell^p(V,\mu):\, \|f-g\|_{p,\mu}<1/(2N)\}$ and
 $W=\{f\in\ell^p(V,\mu):\, |\langle f- g, e_{v}\rangle|<1/(2N),\, \forall v\in F\}$.
 By the hypothesis, we have $N(U,W)\in \mathcal{F}$ an so $N(U,W)\cap [n,+\infty[\in \mathcal{F}$ for all $n\in\N$. Let  $n_0\in \N$ big enough so that $\Chi^{n}(F)\cap F=\emptyset$, for all $n\geq n_0$. We will show that 
 $$N(U,W)\cap [n_0,+\infty[\subset I(F,N).$$
 Let $n\in N(U,W)\cap [n_0,+\infty[$. Thus $\Chi^{n}(F)\cap F=\emptyset$  and there exists $f\in \ell^p(V,\mu)$ such that  
 \begin{equation}
 \|f- g\|_{p,\mu}<\dfrac{1}{2N} \quad\text{and}\quad|\langle B^{n}f- g, e_{v}\rangle|<\dfrac{1}{2N},\quad \forall v\in F.
     \label{eq54}
 \end{equation}
Fix now $v\in F$. Then the right inequality in \eqref{eq54} gives
\begin{equation*}\label{eq31}
0<\dfrac{1}{2}\leq 1- \dfrac{1}{2N} < \sum_{u\in\Chi^{n}(v)}|f(u)|.
\end{equation*}
 Since $\Chi^{n}(v)\cap F=\emptyset$, by using the left inequality in \eqref{eq54}, we obtain  
 \begin{equation*}\label{eq32}
 \sum_{u\in\Chi^{n}(v)}|f(u)\mu_u|^p<\dfrac{1}{(2N)^p}.
 \end{equation*}
 Using the reverse H\"{o}lder's  inequality (for $p>1$) and the last two inequalities, we can conclude that:
 \begin{align*}
 \dfrac{1}{2N}&>  \Big(\sum_{u\in\Chi^{n}(v)}|f(u)\mu_u|^p\Big)^{1/p}\\
  &\geq \Big(\sum_{u\in\Chi^{n}(v)}|f(u)|\Big) \Big(\sum_{u\in\Chi^{n}(v)}|\mu_u|^{-p^{\ast}}\Big)^{-1/p^{\ast}}\\
  &>\dfrac{1}{2} \Big(\sum_{u\in\Chi^{n}(v)}|\mu_u|^{-p^{\ast}}\Big)^{-1/p^{\ast}},
 \end{align*}
 thus
 $$\Big(\sum_{u\in\Chi^{n}(v)}|\mu_u|^{-p^{\ast}}\Big)^{1/p^{\ast}}>N.$$
This holds for all $v\in F$. Therefore, $n\in I(F,N)$, hence $N(U,W)\cap [n_0,+\infty[\subset I(F,N)$. Consequently, $I(F,N)\in \mathcal{F}$.

\end{proof}

The following example shows that the equivalence between conditions $(4)$ and $(6)$ in Theorem \ref{FTransitive_rooted case} does not hold for Frustenberg families which are not filters.
\begin{example}\label{back_not_hc}
 Let $(V,E)$ be the following rooted directed tree.
\begin{center}
\begin{tikzpicture}

\draw  (4.9,0.3) node {$\ro$};
\draw  (6,0.8) node {$v_1$};
\draw  (7,0.8) node {$v_2$};
\draw  (8,0.8) node {$v_3$};

\draw  (6,-0.8) node {$u_1$};
\draw  (7,-0.8) node {$u_2$};
\draw  (8,-0.8) node {$u_3$};

\draw (5,0) node {{\tiny$\bullet$}} ;
\draw (6,0.5) node {{\tiny$\bullet$}} ;

%\draw (4,0) node {{\tiny$\bullet$}} ;
%\draw (3,0) node {{\tiny$\bullet$}} ;
%\draw (2,0) node {{\tiny$\bullet$}} ;

\draw (7,0.5) node {{\tiny$\bullet$}} ;
\draw (8,0.5) node {{\tiny$\bullet$}} ;

\draw (6,-0.5) node {{\tiny$\bullet$}} ;
\draw (7,-0.5) node {{\tiny$\bullet$}} ;
\draw (8,-0.5) node {{\tiny$\bullet$}} ;

% style des flèches
\tikzstyle{suite}=[->,>=stealth,thick,rounded corners=4pt]

%\draw[suite] (0) -- (1)  edge (2) edge (3) ;
%\draw (0,0) -- (1.5,-1) -- (3,-1)--(4.5,-1);
%\draw [dotted, thick] (3) -- (n-1);
%\draw [suite, thick] (n-1) -- (n);
%\draw [dotted, thick] (n) -- (9,0);

\draw[suite] (5,0) -- (6,0.5) edge (7,0.5) edge (8,0.5) ;
\draw[suite] (5,0) -- (6,-0.5) edge (7,-0.5) edge (8,-0.5);
%\draw[suite] (2,0) -- (3,0) edge (4,0) edge (5,0);
\draw [dotted, thick] (8,0.5) -- (9.5,0.5);
\draw [dotted, thick] (8,-0.5) -- (9.5,-0.5);
%\draw [dotted, thick] (0.5,0) -- (2,0);
\end{tikzpicture}
\end{center} 
Let $(m_{k})_{k\geq 1}$ be a strictly increasing sequence of non-negative  integers and $\mu=(\mu_v)_{v\in V}$ be the weight defined by $\mu_{\ro}=1$ and, for every $k\geq 1$, $\mu_{u_k}=1/\mu_{v_k}$, where
$$(\mu_{v_k})_{k\geq1}=\Big(\underset{2m_1}{\underbrace{\dfrac{1}{2},\ldots, \dfrac{1}{2^{m_1}},\dfrac{2}{2^{m_1}},\ldots,\dfrac{1}{2},1}},\underset{2m_2}{\underbrace{2,\ldots, 2^{m_2},\dfrac{2^{m_2}}{2},\ldots,2,1}},\ldots\Big).$$
Let $B$ be the backward shift on $\ell^p(V,\mu)$, $1<p<+\infty$. By Proposition \ref{Bounded_Back},  $B$ is bounded and $\|B\|=(2^{p^\ast}+1/2^{p^\ast})^{1/p^\ast}$. Let $\mathcal{J}$ be the Furstenberg family of infinite subsets of $\N_0$. Note that $\mathcal{J}$ is not a filter. By the definition of the weight,  for any $N\in\N$ and $k\in \N$, we have
$$I(\ro,N):=\Big\{n\in\N_0:\, \dfrac{1}{|\mu_{u_n}|^{p^\ast}}+\dfrac{1}{|\mu_{v_n}|^{p^\ast}}>N^{p^\ast}\Big\}\in\mathcal{J},$$
$$I(v_k,N):=\Big\{n\in\N_0:\, \dfrac{1}{|\mu_{v_{k+n}}|}>N\Big\}\in\mathcal{J} \quad \text{ and } \quad I(u_k,N):=\Big\{n\in\N_0:\,\dfrac{1}{|\mu_{u_{k+n}}|}>N\Big\}\in\mathcal{J}.$$
However, $I(u_k,N)\cap I(v_k,N)=\emptyset$, so $I(u_k,N)\cap I(v_k,N)\notin \mathcal{J}.$    
\end{example}

 A bounded linear operator $T$ defined on a Banach space $X$ is said to be weakly hypercyclic if there exists a vector $x \in X$ such that its orbit $\mathrm{Orb}(x,T)$ is dense in $X$ with respect to the weak topology. 
 
 When the Furstenberg family $\mathcal{F}$ in the previous theorem is chosen as the collection of syndetic subsets of $\N_0$, we can derive a characterization of topologically ergodic backward shift operators on rooted directed trees. Furthermore, when $\mathcal{F}$ represents the family of infinite subsets, $\mathcal{F}$-transitivity coincides with hypercyclicity. This allows us to deduce the following corollary.

\begin{corollary}\label{hc_rooted}
    Let $(V,E)$ be a rooted directed tree and let $\mu=(\mu_v)_{v\in V}$ be a weight on $V$.  Let $X=\ell^p(V,\mu)$, $1\leq p <+\infty$, or $X=c_0(V,\mu)$ and suppose  that the  backward shift $B$ is a bounded operator on $X$. Then the following assertions are equivalent: 
 \begin{enumerate}[label={$(\arabic*)$}]
 \item $B$ is hypercyclic.
 \item $B$ is weakly hypercyclic. 
 \item There exists a bounded subset $C$ of $X\setminus\{0\}$ such that $\Orb(C,B)$ is weakly dense in $X$.
 \item For every $g\in X$, for every open neighborhood $U$ of $g$, for every weakly open neighborhood $W$ of $g$, there exists an integer $n\geq1$ such that
 $$B^n(U)\cap W\neq\emptyset.$$
 \end{enumerate}
\end{corollary}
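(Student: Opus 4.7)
The plan is to apply Theorem~\ref{FTransitive_rooted case} with $\mathcal{F}$ taken to be the Furstenberg family of all infinite subsets of $\mathbb{N}_0$. This family satisfies $A\cap[n,+\infty[\,\in\mathcal{F}$ for every $A\in\mathcal{F}$ and $n\in\mathbb{N}$, so condition $(5)$ of that theorem is included in the equivalence, and $\mathcal{F}$-transitivity coincides with hypercyclicity by Birkhoff's transitivity theorem.

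The implications $(1)\Rightarrow(2)\Rightarrow(3)$ are immediate: for $(2)\Rightarrow(3)$ take $C=\{x\}$ with $x$ a weakly hypercyclic vector. Similarly, $(1)\Rightarrow(4)$ follows from condition $(5)$ of Theorem~\ref{FTransitive_rooted case}, since any infinite subset of $\mathbb{N}_0$ must intersect $[1,+\infty[$. For $(3)\Rightarrow(1)$, I would show that the bounded set $C$ provided by $(3)$ witnesses condition $(3)$ of Theorem~\ref{FTransitive_rooted case}, i.e., $N(C,W)$ is infinite for every nonempty weakly open $W$. If instead $N(C,W)\subset[0,n_0]$ were finite, then $\Orb(C,B)\cap W\subset\bigcup_{n=0}^{n_0}B^n(C)$ would be norm-bounded. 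But every nonempty weakly open subset of an infinite-dimensional Banach space contains an affine translate of a finite-codimensional subspace, and therefore contains vectors of arbitrarily large norm; together with the weak lower semi-continuity of the norm (which keeps weak closures of bounded sets bounded), this contradicts the weak density of $\Orb(C,B)\cap W$ in $W$.

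The main obstacle is the implication $(4)\Rightarrow(1)$: condition $(4)$ only asserts that the return set contains some integer $n\geq 1$, whereas condition $(5)$ of the theorem (for our choice of $\mathcal{F}$) demands that it be infinite. To bridge this gap, fix $g,U,W$ as in $(4)$ and assume for contradiction that $N(U,W)=\{n_1,\dots,n_k\}$ is finite. If some $n_j\geq 1$ satisfies $B^{n_j}g=g$, then the arithmetic progression $\{\ell n_j:\ell\geq 0\}$ lies in $N(U,W)$, contradicting finiteness. Otherwise $B^{n_j}g\neq g$ for every $n_j\in N(U,W)\cap[1,+\infty[$, and for $\epsilon>0$ sufficiently small the convex norm-bounded set $B^{n_j}(U\cap B(g,\epsilon))$ has weak closure equal to its norm closure and contained in the closed ball of radius $\epsilon\|B^{n_j}\|$ around $B^{n_j}g$, which avoids $g$. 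Hahn--Banach separation then provides a weakly open neighborhood $W^{n_j}$ of $g$ disjoint from $B^{n_j}(U\cap B(g,\epsilon))$, and setting $W':=W\cap\bigcap_{j:n_j\geq 1}W^{n_j}$ produces a weakly open neighborhood of $g$ satisfying $N(U\cap B(g,\epsilon),W')\cap[1,+\infty[\,=\emptyset$, contradicting $(4)$ applied to the triple $(g,U\cap B(g,\epsilon),W')$.
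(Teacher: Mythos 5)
Your proposal is correct and follows the same route the paper intends: instantiate Theorem \ref{FTransitive_rooted case} with $\mathcal{F}$ the family of all infinite subsets of $\mathbb{N}_0$, for which $\mathcal{F}$-transitivity is exactly hypercyclicity. The genuine content lies in the two upgrade steps from ``nonempty'' to ``infinite'' return sets, which the paper does not spell out inside a proof but delegates to the remarks following the corollary; there your treatment diverges in technique. For $(3)\Rightarrow(1)$ your argument is sound: a finite $N(C,W)$ would make $\Orb(C,B)\cap W$ norm-bounded, whereas $W$ lies in the weak closure of that set, is norm-unbounded (it contains a finite-codimensional affine subspace), and weak closures of bounded sets are bounded. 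For $(4)\Rightarrow(1)$ you separate $g$ by Hahn--Banach from the weakly closed balls $\overline{B}(B^{n_j}g,\epsilon\|B^{n_j}\|)$ containing the images $B^{n_j}(U\cap B(g,\epsilon))$, shrinking $U$ and $W$ so as to exclude all of the finitely many alleged return times; the paper's Remark instead composes return times through the auxiliary set $U\cap T^{-n}(U)$ after noting (via Mazur) that $T$ has dense range, and as written it only treats the recurrence set $N(U,U)$ for weakly open $U$, so it does not literally cover the mixed norm-open/weakly open pair of condition $(4)$. Both devices work; yours has the merit of handling that mixed pair directly, and the only cosmetic slip is calling $B^{n_j}(U\cap B(g,\epsilon))$ convex --- it need not be, but the enclosing closed ball is, which is all the separation argument requires.
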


 Note that the implication $(2)\Rightarrow (4)$ in the previous corollary holds for any operator (see \cite[Page 42]{CSa}). Furthermore, it is worth mentioning that if there exists an integer $n$ satisfying condition $(4)$, then the return set from $U$ to $W$ is infinite.

\begin{remark}
  Let $X$ be a Banach space and $T\in\mathcal{L}(X)$ be a linear operator. If for any nonempty weakly open subset $U$ of $X$, there exists an integer $n\geq 1$ such that 
    $$T^n(U)\cap U\neq \emptyset,$$ 
    then there are infinitely many integers that satisfy this property.
To see this, note that the range of $T$ is weakly dense in $X$ since it meets any nonempty weakly open subset of $X$. By Mazur's theorem, the weak and norm closures of the range of $T$ coincide, which implies that $T$ has a dense range.

Now, let $U$ be a nonempty weakly open subset of $X$, $x\in U$, and let $n\geq 1$ be an integer such that $T^n x\in U$. Then, the set $W=U\cap T^{-n}(U)$ is a nonempty weakly open subset of $X$. By the hypothesis, there exists an integer $m\geq 1$ such that $T^m(W)\cap W\neq \emptyset$, and so we have
 $$T^{n+m}(U)\cap U\neq\emptyset.$$
Repeating this argument, we conclude that there exist infinitely many integers $k\geq 1$ such that $T^k(U)\cap U\neq\emptyset$.
\end{remark}

In the case of rooted trees, by the previous corollary, recurrence and hypercyclicity coincide for backward shifts. This equivalence, in fact, holds for any bounded linear operator $T \in \mathcal{L}(X)$ whose generalized kernel (or, more generally, the set of vectors $x \in X$ such that $T^nx\longrightarrow0$ as $n\longrightarrow+\infty$) is dense in the underlying space (see \cite[Theorem 2.12]{BGLP}). Now, we will establish a similar result for $\mathcal{F}$-transitivity.

\begin{proposition}
 Let $X$ be a Banach space, $T\in\mathcal{L}(X)$ and let $\mathcal{F}$ be a Furstenberg family on $\N_0$ such that $A\cap [n, +\infty[\in\mathcal{F}$ whenever $A\in\mathcal{F}$ and $n\in \N$. Suppose that the set
 $$X_0:=\{x\in X:\, \underset{n\to+\infty}{\lim} T^nx=0 \}$$
is dense in $X$. Then the following statements are equivalent:
 \begin{enumerate}[label={$(\arabic*)$}]
     \item $T$ is $\mathcal{F}$-transitive.
     \item $T$ is topologically $\mathcal{F}$-recurrent. 
 \end{enumerate}
\end{proposition}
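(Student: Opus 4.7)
The direction $(1) \Rightarrow (2)$ is immediate, since specializing $V = U$ in the definition of $\mathcal{F}$-transitivity gives $N(U,U) \in \mathcal{F}$ for every nonempty open $U$, which is exactly topological $\mathcal{F}$-recurrence. The real content is $(2) \Rightarrow (1)$.

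For $(2) \Rightarrow (1)$ the plan is to use the density of $X_0$ to reduce $\mathcal{F}$-transitivity between two arbitrary nonempty open sets $U, V$ to $\mathcal{F}$-recurrence on a small ball inside $V$. Given such $U$ and $V$, I would first choose, by density of $X_0$, vectors $u' \in U \cap X_0$ and $v' \in V \cap X_0$ and a radius $\varepsilon > 0$ such that the balls $B(u',\varepsilon) \subset U$ and $B(v',3\varepsilon) \subset V$. Since $u', v' \in X_0$, I can then fix an integer $N$ so large that $\|T^n u'\| < \varepsilon$ and $\|T^n v'\| < \varepsilon$ for every $n \geq N$. Setting $W := B(v',\varepsilon)$, topological $\mathcal{F}$-recurrence yields $N(W,W) \in \mathcal{F}$, and by the tail-stability hypothesis on $\mathcal{F}$ we also have $A := N(W,W) \cap [N,+\infty[\, \in \mathcal{F}$.

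The key step is then a translation trick: for every $n \in A$, pick $z \in W$ with $T^n z \in W$ and set $x := z - v' + u'$. On the one hand, $\|x - u'\| = \|z - v'\| < \varepsilon$, so $x \in U$; on the other hand, since $T^n x = T^n z - T^n v' + T^n u'$, the triangle inequality gives
\begin{equation*}
\|T^n x - v'\| \leq \|T^n z - v'\| + \|T^n v'\| + \|T^n u'\| < 3\varepsilon,
\end{equation*}
so $T^n x \in B(v',3\varepsilon) \subset V$. Thus $A \subset N(U,V)$, and upward closure of the Furstenberg family $\mathcal{F}$ forces $N(U,V) \in \mathcal{F}$, which is $\mathcal{F}$-transitivity.

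The one subtlety to get right is that two translations are needed, not one: translating the recurrent point $z$ by $-v'$ alone produces a small vector $y = z - v'$ whose image $T^n y$ is close to $v'$ (using $\|T^n v'\|$ small), but $y$ sits near $0$, not near $u$. Adding $u'$ fixes the source location, and the reason this does not spoil the target is precisely that $u' \in X_0$, so $T^n u'$ is also negligible for $n \geq N$. Beyond this, all ingredients (density of $X_0$, upward closure, tail-stability of $\mathcal{F}$) are applied in a purely mechanical way, so I expect no further obstacle.
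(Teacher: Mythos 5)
Your proof is correct and follows essentially the same route as the paper: both arguments translate a recurrent point of a small ball inside $V$ by a vector from $X_0$ so that it lands in $U$ while its image stays in $V$, and both use the tail-stability of $\mathcal{F}$ together with upward closure to conclude. If anything, your explicit choice of the radii ($B(u',\varepsilon)\subset U$, $B(v',3\varepsilon)\subset V$, $W=B(v',\varepsilon)$) makes the step ``the translated point lies in $U$'' cleaner than the paper's formulation, which picks $y\in(U-V')\cap X_0$ and implicitly relies on the same kind of shrinking.
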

\begin{proof}
It is clear that $(1)\Rightarrow (2)$. Assume that $(2)$ holds. Let $U,V$ be nonempty open subsets of $X$.  There exist a nonempty open subset $V'$ of $X$ and
a $0$-neighbourhood $W$ such that  $W+V' \subset V$. Assume that $U\neq V'$, otherwise there is nothing to prove. Let $y\in(U-V')\cap X_0$. There exists then a non-negative integer $N$ such that $T^ny\in W$, for all $n\geq N$. By the hypothesis, we have $N(V',V')\in\mathcal{F}$ and so $N(V',V')\cap [N,+\infty[\in\mathcal{F}$.  Now, we will show that 
$$N(V',V')\cap [N,+\infty[ \subset N(U,V).$$
Let $n\in N(V',V')\cap [N,+\infty[$. Thus $T^nV'\cap V'\neq \emptyset$ and $T^ny\in W$. There exists then $z\in V'$ such that $T^nz\in V'$. Hence $x=y+z\in U$ and 
$$T^nx=T^ny+T^nz\in W+V'\subset V,$$
therefore, $n\in N(U,V)$. Consequently, $N(V',V')\cap [N,+\infty[\subset N(U,V)$ and so $N(U,V)\in\mathcal{F}$. This finishes the proof.
\end{proof}

\section{$\mathcal{F}$-transitivity - unrooted case} \label{section5}

We will now present our second main result, which characterizes $\mathcal{F}$-transitivity for backward shifts on unrooted directed trees and establishes its equivalence with topological $\mathcal{F}$-recurrence.

\begin{theorem}\label{FTransitive_unrooted case}
 Let $(V,E)$ be an unrooted directed tree, let $\mu=(\mu_v)_{v\in V}$ be a weight on $V$, and let $\mathcal{F}$ be a Furstenberg family on $\N_0$. Let $X=\ell^p(V,\mu)$, $1\leq p <+\infty$, or $X=c_0(V,\mu)$ and suppose  that the  backward shift $B$ is a bounded operator on $X$. The following assertions are equivalent: 
 \begin{enumerate}[label={$(\arabic*)$}]
  \item $B$ is $\widetilde{\mathcal{F}}$-transitive.
 \item $B$ is $\mathcal{F}$-transitive.
 \item For every $N\in\N$ and every finite subset $F\subset V$, we have $I(F,N)\cap J(F,N)\in \mathcal{F}$, where
 $$I(F,N):=\begin{cases}
   \underset{v\in F}{ \bigcap}\Big\{n\in\N_0:\, \underset{u\in\Chi^n(v)}{\sup}\dfrac{1}{|\mu_u|}>N\Big\} & \text{if } X=\ell^1(V,\mu);\\
   \underset{v\in F}{ \bigcap}\Big\{n\in\N_0:\, \Big(\underset{u\in\Chi^n(v)}{\sum}\dfrac{1}{|\mu_u|^{p^\ast}}\Big)^{1/p^\ast}>N\Big\} & \text{if }X=\ell^p(V,\mu),\, 1<p<+\infty;\\
   \underset{v\in F}{ \bigcap}\Big\{n\in\N_0:\, \underset{u\in\Chi^n(v)}{\sum}\dfrac{1}{|\mu_u|}>N\Big\} & \text{if }X=c_0(V,\mu);
\end{cases}$$
and
$$J(F,N):=\begin{cases}
  \underset{v\in F}{ \bigcap}\Big\{n\in\N_0:\, \min\big(|\mu_{\p^{n}(v)}|,\underset{u\in \Chi^{n}(\p^{n}(v))}{\inf}\,|\mu_u|\big)<\frac{1}{N}\Big\} & \text{if } X=\ell^1(V,\mu);\\
   \underset{v\in F}{ \bigcap}\Big\{n\in\N_0:\, \dfrac{1}{|\mu_{\p^{n}(v)}|^{p^\ast}}+\underset{u \in \Chi^{n}(\p^{n}(v))}{\sum}\dfrac{1}{|\mu_u|^{p^{\ast}}}>N^{p^\ast}\Big\} & \text{if }X=\ell^p(V,\mu),\, 1<p<+\infty;\\
   \underset{v\in F}{ \bigcap}\Big\{n\in\N_0:\, \dfrac{1}{|\mu_{\p^{n}(v)}|}+\underset{u \in \Chi^{n}(\p^{n}(v))}{\sum}\dfrac{1}{|\mu_u|}>N\Big\} & \text{if }X=c_0(V,\mu).
\end{cases}$$
 \end{enumerate}
If $\mathcal{F}$ satisfies that $A\cap [n,+\infty[ \in\mathcal{F}$ whenever $A\in\mathcal{F}$ and $n\in\N$, then the above conditions are equivalent to
\begin{enumerate}
    \item[$(4)$] $B$ is topologically $\mathcal{F}$-recurrent.
\end{enumerate} 
\end{theorem}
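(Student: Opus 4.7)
The proof adapts the scheme of Theorem \ref{FTransitive_rooted case} to the unrooted setting. The crucial new phenomenon is that $B^n e_v = e_{\p^n(v)}$ is nonzero, so the identity map no longer serves as the $I_n$ in the $\mathcal{F}$-Transitivity Criterion, and this is precisely what the new condition $J(F,N)$ addresses. I focus on $X = \ell^p(V,\mu)$ with $1 < p < +\infty$; the cases $X = \ell^1(V,\mu)$ and $X = c_0(V,\mu)$ follow by parallel arguments via Lemma \ref{Lemma_Grosse}. The implications $(1) \Rightarrow (2)$ and $(1) \Rightarrow (4)$ are immediate, and $(2) \Rightarrow (1)$ combines $\mathcal{F}$-transitivity with weak mixing (granted by \cite[Theorem 5.2]{GrPa}) via \cite[Lemma 2.3]{BMPP2}, so it suffices to prove $(2) \Leftrightarrow (3)$ and, under the additional assumption on $\mathcal{F}$, $(4) \Rightarrow (3)$.

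For $(3) \Rightarrow (1)$, I apply the $\mathcal{F}$-Transitivity Criterion (Theorem \ref{Trans_Crite}) to the filter $\mathcal{F}_{\mathcal{B}}$ generated by the sets $I(F,N) \cap J(F,N)$, with dense set $\mathcal{D} = \mathrm{span}\{e_v : v \in V\}$. The maps $S_n e_v := g_{v,n}$ are nearly-minimum-norm distributions on $\Chi^n(v)$ summing to $1$, as in the rooted proof; their smallness is controlled by $I(F,N)$ via Lemma \ref{Lemma_Grosse}. The novelty lies in the $I_n$ maps: for each $v$ and $n$, set $I_n e_v := e_v$ whenever $|\mu_{\p^n(v)}| \leq 1/N$ (so $\|I_n e_v - e_v\| = 0$ and $\|B^n I_n e_v\|_{p,\mu} = |\mu_{\p^n(v)}| \leq 1/N$), and otherwise set $I_n e_v := e_v - h_{v,n}$, where $h_{v,n}$ is a nearly-minimum-norm distribution on $\Chi^n(\p^n(v))$ summing to $1$, so that $B^n h_{v,n} = e_{\p^n(v)}$ exactly (giving $\|B^n I_n e_v\|=0$) and $\|I_n e_v - e_v\|_{p,\mu}$ is close to $\bigl(\sum_{u \in \Chi^n(\p^n(v))} |\mu_u|^{-p^\ast}\bigr)^{-1/p^\ast}$. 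These two alternatives correspond precisely to the two terms in $J(F,N)$, and the hypothesis ensures that on a single set of $n$ in $\mathcal{F}_{\mathcal{B}}$, all of $\|S_n e_v\|$, $\|I_n e_v - e_v\|$, and $\|B^n I_n e_v\|$ are $O(1/N)$ simultaneously for every $v$ in any given finite set $F$. Extending $I_n$ and $S_n$ linearly to $\mathcal{D}$ then fulfills the hypotheses of the criterion.

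For $(2) \Rightarrow (3)$, I take $U = B(\sum_{v \in F} e_v, \varepsilon)$ and $V = B(\tfrac12 \sum_{v \in F} e_v, \varepsilon)$ with $\varepsilon = 1/(2N)$. The disjoint supports of the two centers, combined with $\|B^n z\|_{p,\mu} \leq \|B\|^n \|z\|_{p,\mu}$, force $N(U,V) \subset [n_0, +\infty[$ once $N$ is large enough (depending on $F$, $n_0$, and $\|B\|$), where $n_0 = n_0(F)$ is chosen so that $\Chi^{n_0}(F) \cap F = \p^{n_0}(F) \cap F = \emptyset$ and $\Chi^n(\p^n(v)) \cap F = \{v\}$ for every $v \in F$ and $n \geq n_0$. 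Writing $y = x - \sum_F e_v$ with $\|y\|_{p,\mu} < \varepsilon$, the $v$-component of $B^n x - \tfrac12 \sum_F e_v$ forces $|(B^n y)(v)| \geq 1/4$, and H\"older on $\Chi^n(v)$ produces $I(F, N')$ at a parameter $N'$ proportional to $1/\varepsilon$. The $\p^n(v)$-component satisfies $|1 + (B^n y)(\p^n(v))| \cdot |\mu_{\p^n(v)}| < \varepsilon$; a dichotomy on whether $|\mu_{\p^n(v)}| \leq 2\varepsilon$ yields $J(F,N')$: either the first term $|\mu_{\p^n(v)}|^{-p^\ast}$ of $J$ already exceeds $(2\varepsilon)^{-p^\ast}$, or one has $|(B^n y)(\p^n(v))| \geq 1/2$, so H\"older on $\Chi^n(\p^n(v))$ pushes the sum term of $J$ past $(2\varepsilon)^{-p^\ast}$. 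Thus $N(U,V) \subset I(F,N') \cap J(F,N')$, and the latter lies in $\mathcal{F}$ by upward closure; monotonicity of $I$ and $J$ in $N$ then delivers the conclusion for all $N$. The implication $(4) \Rightarrow (3)$ follows by the same extraction with $U = V = B(\sum_F e_v, \varepsilon)$, now using the assumption $A \cap [n_0, +\infty[ \in \mathcal{F}$ for $A \in \mathcal{F}$ to discard the small-$n$ contribution in $N(U,U)$.

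The main obstacle is the case-by-case $I_n$ construction. One must check that the threshold $|\mu_{\p^n(v)}| \leq 1/N$ correctly selects, for each $v \in F$ and each $n$, which of the two terms in $J(F,N)$ accounts for the bound, and that the resulting estimates on $\|I_n e_v - e_v\|$ and $\|B^n I_n e_v\|$ hold simultaneously on a single set in $\mathcal{F}_{\mathcal{B}}$ uniformly over finite $F$. This simultaneity is exactly what the intersection $I(F,N) \cap J(F,N)$ in the hypothesis (as opposed to $I \cup J$) is designed to provide.
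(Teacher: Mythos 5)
Your route is the paper's: the same reduction of $(1)\Leftrightarrow(2)$ via \cite[Theorem 5.2]{GrPa} and \cite[Lemma 2.3]{BMPP2}, the same extraction of $I(F,N)\cap J(F,N)$ from a return set by reverse H\"older plus a dichotomy on $|\mu_{\p^{n}(v)}|$, the same $S_n$, and the same two-branch definition of $I_ne_v$ (either $e_v$ itself, or $e_v$ minus a near-minimal-norm mass distribution $h_{v,n}$ on $\Chi^{n}(\p^{n}(v))$ so that $B^n(e_v-h_{v,n})=0$). There is, however, one genuine gap in your $(3)\Rightarrow(2)$ step: you branch according to whether $|\mu_{\p^{n}(v)}|\leq 1/N$, where $N$ is the accuracy parameter attached to the target neighbourhoods, so the family $(I_n)$ depends on $N$. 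For any fixed $N$ this family does not satisfy hypothesis $(1)$ of Theorem \ref{Trans_Crite}: on the first branch $\|B^nI_ne_v\|_{p,\mu}=|\mu_{\p^{n}(v)}|$ is only bounded by $1/N$, so for $\varepsilon<1/N$ the set $\{n:\|B^nI_ne_v\|_{p,\mu}<\varepsilon\}$ can fail to belong to $\mathcal{F}_{\mathcal{B}}$ (e.g.\ take $|\mu_{\p^{n}(v)}|=1/N$ for all $n$ while $\sum_{u\in\Chi^{n}(\p^{n}(v))}|\mu_u|^{-p^\ast}\to\infty$, which is compatible with $(3)$); hence $\mathcal{F}_{\mathcal{B}}\text{-}\lim_n B^nI_ne_v=0$ fails, and the criterion, which demands a single family of maps valid for every neighbourhood, does not apply. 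The repair is to make the dichotomy intrinsic, as the paper does: with $M_{v,n}:=|\mu_{\p^{n}(v)}|^{-p^\ast}+\sum_{u\in\Chi^{n}(\p^{n}(v))}|\mu_u|^{-p^\ast}$, set $I_ne_v=e_v$ when $|\mu_{\p^{n}(v)}|^{-p^\ast}\geq M_{v,n}/2$ and $I_ne_v=e_v-h_{v,n}$ otherwise. Then in both branches $\max\big(\|I_ne_v-e_v\|_{p,\mu},\|B^nI_ne_v\|_{p,\mu}\big)\leq (2/M_{v,n})^{1/p^\ast}$, and since $n\in J(\{v\},N)$ forces $M_{v,n}>N^{p^\ast}$, the required $\mathcal{F}_{\mathcal{B}}$-limit holds for one fixed family. (Alternatively one could reprove the criterion for accuracy-dependent maps, but that is not what Theorem \ref{Trans_Crite} states.)

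A smaller inaccuracy in your $(2)\Rightarrow(3)$: one cannot arrange $\Chi^{n}(\p^{n}(v))\cap F=\{v\}$ for all large $n$, because two vertices of $F$ whose backward paths merge share all sufficiently remote ancestors. The argument only needs $d_n:=|\Chi^{n}(\p^{n}(v))\cap F|\geq 1$, which always holds since $v$ itself lies in this set, and the estimate $|(B^nx)(\p^{n}(v))|\geq d_n-\big|\sum_{u\in\Chi^{n}(\p^{n}(v))}y(u)\big|$ goes through unchanged; this is how the paper phrases it. Apart from these points, your extraction of $I$ and $J$ from the return sets (including the $\tfrac12\sum_{v\in F}e_v$ target to rule out small $n$ under $(2)$, versus $U=V$ together with the tail hypothesis on $\mathcal{F}$ under $(4)$) is correct and matches the paper's computation.
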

\begin{proof}
    We will only prove the equivalences in the case where $X = \ell^p(V,\mu)$ with $1 < p < +\infty$. A similar argument can be made to deduce the cases where $X = \ell^1(V,\mu)$ and $X = c_0(V,\mu)$. As in the proof of Theorem \ref{FTransitive_rooted case} and by using \cite[Theorem 5.2]{GrPa}, we obtain that $(1)\Longleftrightarrow (2)$. Note that, it is clear that 
$(2)\Rightarrow (4)$.

     Let us  show that $(4)\Rightarrow (3)$.  Let $F$ be a finite subset of $V$  and let $N\in\N$.  Set $g=\sum_{v\in F} e_v\in\ell^p(V,\mu)$. Let $U=\{f\in\ell^p(V,\mu):\, \|f-g\|_{p,\mu}<\frac{1}{2N}\}$. Let  $n_0\in \N$ big enough so that $\Chi^{n}(F)\cap F=\emptyset$ and $\p^{n}(F)\cap F=\emptyset$, for all $n\geq n_0$. By the hypothesis, $N(U,U)\in\mathcal{F}$ and so $N(U,U)\cap [n_0,+\infty[\in\mathcal{F}$. Since $\mathcal{F}$ is a Furstenberg family, it is enough to prove that 
     $$N(U,U)\cap [n_0,+\infty[\subset I(F,N)\cap J(F,N).$$
     We can assume that $N$ is sufficiently large such that  $1<|\mu_v|N$, $\forall v \in F$. Let $n\in N(U,U)\cap [n_0,+\infty[$. There exists then a function $f$  in $\ell^p(V,\mu)$ such that 
\begin{equation}
\|f- g\|_{p,\mu}<\dfrac{1}{2N} \quad\text{and}\quad\|B^{n}f- g\|_{p,\mu}<\dfrac{1}{2N}.
\label{eqq45}
\end{equation}
Fix now $v\in F$.  By the right above inequality,  we have 
\begin{equation}\label{eqq43}
0<\dfrac{1}{2}<1-\dfrac{1}{2N|\mu_v|}< \sum_{u\in\Chi^{n}(v)}|f(u)|.
\end{equation}
Since $\Chi^{n}(v)\cap F=\emptyset$ and $F=\supp(g)$, by the left inequality in \eqref{eqq45}, we obtain
\begin{equation}\label{eqq44}
 \sum_{u\in \Chi^{n}(v) } |f(u)\,\mu_u|^p<\dfrac{1}{(2N)^p}.   
\end{equation}
 By the reverse H\"{o}lder's inequality ($p>1$) and \eqref{eqq43}, we get
 \begin{align*}
 \sum_{u\in\Chi^{n}(v)}|f(u)\mu_u|^p &\geqslant \Big(\sum_{u\in\Chi^{n}(v)}|f(u)|\Big)^{p} \Big(\sum_{u\in\Chi^{n}(v)}|\mu_u|^{\frac{p}{1-p}}\Big)^{1-p}\\
 &\geqslant \dfrac{1}{2^p} \Big(\sum_{u\in\Chi^{n}(v)}|\mu_u|^{\frac{p}{1-p}}\Big)^{1-p},
 \end{align*}
 combining this with \eqref{eqq44}, we get
 $$\Big(\sum_{u\in\Chi^{n}(v)}|\mu_u|^{-p^{\ast}}\Big)^{1/p^{\ast}}>N,$$
this holds for any fixed $v\in F$, hence $n\in I(F,N)$. Let us also show that $n\in J(F,N)$. By contradiction, suppose that  there is a $v\in F$ for which  it holds
\begin{equation}
  \dfrac{1}{|\mu_{\p^{n}(v)}|^{p^\ast}}+\underset{u \in \Chi^{n}(\p^{n}(v))}{\sum}\dfrac{1}{|\mu_u|^{p^{\ast}}}\leq  N^{p^\ast}.  
  \label{eqq1}
\end{equation}
 Thus $\frac{1}{N}\leqslant  |\mu_{\p^{n}(v)}|$.  Set 
  $$h=(f-g)\chi_{\Chi^{n}(\p^{n}(v))}.$$
  One has
  \begin{equation}
      \|h\|_{p,\mu}\leqslant \|f-g\|_{p,\mu}<\dfrac{1}{2N}.
      \label{eqq46}
  \end{equation}
  Note now that,  we have
  $$(B^{n}f)(\p^{n}(v))=d_n+\sum_{u\in\Chi^{n}(\p^{n}(v))}h(u),$$
  where $d_n:=\big|\Chi^{n}(\p^{n}(v))\cap F\big|\geq1$, where $|\cdot|$ stands for the cardinal. Since $\p^{n}(v)\in V\setminus F$, by using the right inequality in \eqref{eqq45}, we get
 \begin{align*}
     \dfrac{1}{2N} &>|(B^{n}f)(\p^{n}(v))\,\mu_{\p^{n}(v)}|\\
     &\geq \Big(d_n-\big|\sum_{u\in\Chi^{n}(\p^{n}(v))}h(u)\big|\Big)|\mu_{\p^{n}(v)}|\\
     & \geq \dfrac{1}{N}\Big(1-|\sum_{u\in\Chi^{n_k}(\p^{n_k}(v))}h_k(u)|\Big),
 \end{align*}
 hence
 $$\dfrac{1}{2}<\sum_{u\in\Chi^{n}(\p^{n}(v))}|h(u)|.$$
 By the reverse H\"{o}lder's inequality and \eqref{eqq46}, we have 
 \begin{align*}
\dfrac{1}{2N}&> \Big(\sum_{u\in\Chi^{n}(\p^{n}(v))}|h(u)|\Big) \Big(\sum_{u\in\Chi^{n}(\p^{n}(v))}|\mu_u|^{-p^\ast}\Big)^{-1/p^{\ast}}\\
&> \dfrac{1}{2} \Big(\sum_{u\in\Chi^{n}(\p^{n}(v))}|\mu_u|^{-p^\ast}\Big)^{-1/p^{\ast}},
 \end{align*}
then
$$\underset{u \in \Chi^{n}(\p^{n}(v))}{\sum}\dfrac{1}{|\mu_u|^{p^{\ast}}}>N^{p^\ast}.$$
This contradicts \eqref{eqq1}. Therefore, $n\in J(F,N)$. Consequently, $N(U,U)\cap [n_0,+\infty[\subset I(F,N)\cap J(F,N)$, and so $I(F,N)\cap J(F,N)\in\mathcal{F}$. 

\smallskip

Note that, a minor adjustment to the proof of the implication $(4)\Rightarrow (3)$ allows us to deduce $(2)\Rightarrow (3)$. Let us show now that $(3)\Rightarrow (2)$.   Let $\mathcal{B}$ be the filter base consisting of all subsets $I(F,N)\cap J(F,N)$ of $\N_0$, where  $N>0$ and  $F\subset V$ is finite. Let $\mathcal{F}_\mathcal{B}$ be the filter generated by $\mathcal{B}$, that is,
   $$\mathcal{F}_{\mathcal{B}}=\{A\subset \N_0: B\subset  A \text{ for some } B\in \mathcal{B}\}.$$ 
   We will show that $B$ satisfies the $\mathcal{F}_{\mathcal{B}}$-transitivity Criterion, which implies that $B$ is $\mathcal{F}$-transitive since $\mathcal{F}_{\mathcal{B}}\subset \mathcal{F}$.
   Set $\mathcal{D}=\mathrm{span}\{e_v:\, v\in V\}$, which is dense in $\ell^p(V,\mu)$. For every $v\in V$ and $n\in \N$, by the continuity of $B^{n}$, we have
        $$M_{v,n}:=\dfrac{1}{|\mu_{\p^{n}(v)}|^{p^\ast}}+\underset{u \in \Chi^{n}(\p^{n}(v))}{\sum}\dfrac{1}{|\mu_u|^{p^{\ast}}}<+\infty,$$
        then either 
        \begin{equation}\label{eqq40}
           \dfrac{1}{|\mu_{\p^{n}(v)}|^{p^\ast}}\geq \dfrac{M_{v,n}}{2}, 
        \end{equation}
        or
        \begin{equation}\label{eqq41}
          \underset{u \in \Chi^{n}(\p^{n}(v))}{\sum}\dfrac{1}{|\mu_u|^{p^{\ast}}}>\dfrac{M_{v,n}}{2}.  
        \end{equation}
        In the case where \eqref{eqq40} holds, we set
        $$I_{n}e_v=e_v,$$
        and then
        \begin{equation}
            \|I_{n}e_v-e_v\|_{p,\mu}=0 \quad \text{and} \quad\|B^{n}I_{n} e_v\|_{p,\mu}=\|e_{\p^{n}(v)}\|_{p,\mu}=| \mu_{\p^{n}(v)}|\leq \Big(\dfrac{2}{M_{v,n}}\Big)^{1/p^\ast}.
            \label{eqq3}
        \end{equation}
        In the case where \eqref{eqq41} holds, by Lemma \ref{Lemma_Grosse}, the there exists $h_{v,n}\in\K^V$ non-negative, of support in $\Chi^{n}(\p^{n}(v))$ such that 
\begin{equation}\label{eq42}
    \sum_{u\in \Chi^{n}(\p^{n}(v))}h_{v,n}(u)=1 \quad \text{and} \quad \Big(\sum_{u\in\Chi^{n}(\p^{n}(v))}|h_{v,n}(u)\,\mu_u|^p\Big)^{1/p}\leq \Big(\dfrac{2}{M_{v,n}}\Big)^{1/p^\ast}.
\end{equation}
Hence, $h_{v,n}\in \ell^p(V,\mu)$, and by setting
    $$I_{n}e_v=e_v-h_{v,n},$$
we obtain
\begin{equation}
    \|I_{n}e_v-e_v\|_{p,\mu}\leq \Big(\dfrac{2}{M_{v,n}}\Big)^{1/p^\ast}  \quad \text{and} \quad B^{n}I_{n} e_v=0.
    \label{eqq4}
\end{equation}
In both cases, we extend linearly on $\mathcal{D}$ the maps $I_{n}$.  Let  $g=\sum_{a\in F}g(a) e_a\in \mathcal{D}$, where $F\subset V$ is its support.
We will show that $\mathcal{F}_{\mathcal{B}}\text{-}\underset{n}{\lim}(I_ng,B^nI_ng)=(g,0)$. Let $\varepsilon>0$, $\mathcal{U}:=\{f\in\ell^p(V,\mu):\, \|f-g\|_{p,\mu}<\varepsilon\}$ and $\mathcal{V}:=\{f\in\ell^p(V,\mu):\, \|f\|_{p,\mu}<\varepsilon\}$. Let us check that
\begin{equation}
   J(F,N)\subset\{n\in\N_0:\, (I_ng,B^nI_ng)\in \mathcal{U}\times \mathcal{V}\}, 
   \label{eqq5}
\end{equation}
where
$$N=\max\Big\{\frac{|g(a)|\,|F|2^{1/p^\ast}}{\varepsilon}:\, a\in F\Big\}.$$ 
Let $n\in J(F,N)$. We have then $M_{a,n}>N^{p^\ast}$, for all $a\in F$. By the definition of $N$, \eqref{eqq3} and \eqref{eqq4}, we obtain:
\begin{align*}
\max \Big\{\|I_{n}g-g\|,\|B^{n}I_{n}g\|_{p,\mu}  \Big\} &\leq \sum_{a\in F}|g(a)| \Big(\dfrac{2}{M_{a,n}}\Big)^{1/p^\ast}\\     
&<  \sum_{a\in F}|g(a)| \dfrac{2^{1/p^\ast}}{N}\\
&< \varepsilon,
\end{align*}
hence $I_ng\in \mathcal{U}$ and $B^nI_ng\in \mathcal{V}$. Therefore, \eqref{eqq5} holds and so 
$$\{n\in\N_0:\, (I_ng,B^nI_ng)\in \mathcal{U}\times \mathcal{V}\}\in\mathcal{F}_\mathcal{B}.$$
Consequently, the first statement of the $\mathcal{F}_\mathcal{B}$-transitivity Criterion holds. As for the second statement, we can define the maps $S_n$ as in the proof of Theorem \ref{FTransitive_rooted case}. By employing the same arguments, we can deduce that
$$\{n\in\N_0:\, (B^nS_ng,S_ng)\in \mathcal{U}\times \mathcal{V}\}\in\mathcal{F}_{\mathcal{B}}.$$
This shows that the second statement of the $\mathcal{F}_\mathcal{B}$-transitivity Criterion holds. Hence $B$ is $\mathcal{F}$-transitive and statement $(2)$ holds.

\end{proof}

Once again, if the Furstenberg family $\mathcal{F}$ in the previous theorem represents the collection of syndetic subsets, we can derive a  characterization of topologically ergodic backward shift operators on unrooted directed trees. Additionally, in the case where $\mathcal{F}$ represents the family of infinite subsets, we deduce the following corollary.

\begin{corollary}
    Let $(V, E)$ be an unrooted directed tree and $\mu=(\mu_v)_{v\in V}$ a weight on $V$.  Let $X=\ell^p(V,\mu)$, $1\leq p <+\infty$, or $X=c_0(V,\mu)$ and suppose  that the  backward shift $B$ is a bounded operator on $X$. Then $B$ is hypercyclic if and only if $B$ is recurrent.
\end{corollary}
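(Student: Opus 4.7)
The plan is to deduce the corollary directly from Theorem \ref{FTransitive_unrooted case} by choosing an appropriate Furstenberg family. Let $\mathcal{F}$ denote the collection of all infinite subsets of $\N_0$. This is clearly a Furstenberg family (it is hereditary upward and all its elements are infinite), and it obviously satisfies the extra stability property $A\cap [n,+\infty[\in\mathcal{F}$ whenever $A\in\mathcal{F}$ and $n\in\N$, since removing finitely many elements from an infinite set leaves it infinite. Therefore, the full chain of equivalences $(1)\Longleftrightarrow(2)\Longleftrightarrow(3)\Longleftrightarrow(4)$ in Theorem \ref{FTransitive_unrooted case} applies for this particular $\mathcal{F}$.

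Next I would match the two terminal concepts to the framework. For $\mathcal{F}=\{A\subset\N_0:\, A\text{ infinite}\}$, saying that $B$ is $\mathcal{F}$-transitive is just saying that $N(U,V)$ is infinite for every pair of nonempty open sets $U,V$; by Birkhoff's transitivity theorem this is the same as $N(U,V)\neq\emptyset$ for all nonempty open $U,V$, hence equivalent to the hypercyclicity of $B$. Similarly, topological $\mathcal{F}$-recurrence reduces to the condition that $N(U,U)$ is infinite for every nonempty open $U$. A short argument shows that this is equivalent to the usual (topological) recurrence, i.e., $N(U,U)\neq\emptyset$ for all nonempty open $U$: given $n_1\in N(U,U)$, pick a nonempty open $U'\subset U\cap T^{-n_1}U$, then any $n_2\in N(U',U')$ yields $n_1+n_2\in N(U,U)$, and iterating this produces infinitely many return times.

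Combining the two identifications, Theorem \ref{FTransitive_unrooted case} with this choice of $\mathcal{F}$ gives exactly the desired equivalence: $B$ is hypercyclic if and only if $B$ is recurrent. No substantive obstacle is expected here, since all the real work has been done in establishing Theorem \ref{FTransitive_unrooted case}; the only subtlety is the very small observation that ordinary topological recurrence automatically upgrades to the statement that all return sets $N(U,U)$ are infinite, which is what permits the identification of ordinary recurrence with $\mathcal{F}$-recurrence for $\mathcal{F}$ = infinite sets.
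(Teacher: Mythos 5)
Your proposal is correct and is exactly the paper's route: the corollary is stated there as the immediate specialization of Theorem \ref{FTransitive_unrooted case} to the Furstenberg family of infinite subsets of $\N_0$, with hypercyclicity matched to $\mathcal{F}$-transitivity via Birkhoff and recurrence matched to topological $\mathcal{F}$-recurrence. Your added observation that ordinary recurrence automatically forces every return set $N(U,U)$ to be infinite is the right (standard) bookkeeping step, and your iteration argument for it is valid.
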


In the upcoming section, we will generalize this corollary by using the concept of $\Gamma$-supercyclicity.

\section{$\Gamma$-supercyclicity} \label{section6}
Supercyclicity is a weaker notion than hypercyclicity, which requires the density of a projective orbit instead of an orbit, see \cite{BaMa}. We can study these both notions by considering the notion of $\Gamma$-supercyclicity, see  \cite{CEM}. Let $X$ be a Banach space and $\Gamma$ be a subset of $\C$. An operator $T\in\mathcal{L}(X)$ is called \textit{$\Gamma$-supercyclic} if there exists some vector $x\in X$ such that the set
\[\Orb(\Gamma x,T):=\{\lambda T^nx:\, \lambda\in\Gamma,\, n\in\N_0\}\]
is dense in $X$. In particular, $\C$-supercyclicity is called simply supercyclicity, and $\{1\}$-supercyclicity coicides with hypercyclicity. This notion has been studied for bilateral backward shifts in \cite{Ab1}, and more generally for a family of translation operators on weighted $L^p$-spaces on locally compact groups in \cite{AbKu}.

In the same way, we can extend the notion of recurrence in the following way: a vector $x\in X$ is said to be \textit{$\Gamma$-recurrent vector} for $T$ if there exist a strictly increasing sequence $(n_k)_{k\geq 1}$ of positive integers and a sequence $(\lambda_k)_{k\geq1}$ in $\Gamma$ such that
\[\lambda_k T^{n_k}x\underset{k\to+\infty}{\longrightarrow}x.\]
The operator $T$ is called $\Gamma$-recurrent if its set of $\Gamma$-recurrent vectors, denoted by $\Gamma$\text{-}$\mathrm{Rec}(T)$, is dense in $X$.  This notion coincides with the so-called super-recurrent when $\Gamma$ is equal to the complex plane \cite{AmBe2}.

 In the following theorem, we provide a characterization of $\Gamma$-supercyclicity for backward shift operators on unrooted directed trees and give its equivalence with $\Gamma$-recurrence. The proof of this theorem is omitted, as it follows a similar approach to that of Theorem \ref{FTransitive_unrooted case}, specifically when considering the Furstenberg family $\mathcal{F}$ as the collection of infinite subsets of $\mathbb{N}_0$. Furthermore, instead of using the $\mathcal{F}$-transitivity criterion, we employ the $\Gamma$-supercyclicity criterion described below.

\begin{theorem}\label{super_criterion} 
 Let $(V, E)$ be an unrooted directed tree and $\mu=(\mu_v)_{v\in V}$ a weight on $V$.  Let $X=\ell^p(V,\mu)$, $1\leq p <+\infty$, or $X=c_0(V,\mu)$ and suppose  that the  backward shift $B$ is a bounded operator on $X$. Then the following conditions are equivalent:
 \begin{enumerate}[label={$(\arabic*)$}]
 \item $B$ is $\Gamma$-supercyclic.
 \item $B$ is $\Gamma$-recurrent.
 \item  There are an increasing sequence of positive integers $(n_k)_{k\geq 1}$  and a sequence $(\lambda_k)_{\geq 1}$ in $\Gamma\setminus\{0\}$ such that, for every $v\in V$, we have
 $$\begin{cases}
\underset{u \in \Chi^{n_k}(v)}{\sum}\dfrac{|\lambda_k|^{p^{\ast}}}{|\mu_u|^{p^{\ast}}}\underset{k\to+\infty}{\longrightarrow}+\infty & \text{ if } X=\ell^p(V,\mu),\, 1< p<+\infty;\\
 &\\
\underset{u\in \Chi^{n_k}(v)}{\inf}\,\dfrac{|\mu_u|}{|\lambda_k|}\underset{k\to+\infty}{\longrightarrow}0 &\text{ if } X=\ell^1(V,\mu);\\
&\\
\underset{u \in \Chi^{n_k}(v)}{\sum}\dfrac{|\lambda_k|}{|\mu_u|}\underset{k\to+\infty}{\longrightarrow}+\infty&\text{ if } X=c_0(V,\mu),
\end{cases}$$
 and
   $$\begin{cases}
\dfrac{1}{|\lambda_k\,\mu_{\p^{n_k}(v)}|^{p^\ast}}+\underset{u \in \Chi^{n_k}(\p^{n_k}(v))}{\sum}\dfrac{1}{|\mu_u|^{p^{\ast}}}\underset{k\to+\infty}{\longrightarrow}+\infty & \text{ if } X=\ell^p(V,\mu),\, 1< p<+\infty;\\
 &\\
\min\Big(|\lambda_k\,\mu_{\p^{n_k}(v)}|,\underset{u\in \Chi^{n_k}(\p^{n_k}(v))}{\inf}\,|\mu_u|\Big)\underset{k\to+\infty}{\longrightarrow}0 &\text{ if } X=\ell^1(V,\mu);\\
&\\
\dfrac{1}{|\lambda_k\, \mu_{\p^{n_k}(v)}|}+\underset{u \in \Chi^{n_k}(\p^{n_k}(v))}{\sum}\dfrac{1}{|\mu_u|}\underset{k\to+\infty}{\longrightarrow}+\infty&\text{ if } X=c_0(V,\mu)
\end{cases}.$$
 \end{enumerate}

\end{theorem}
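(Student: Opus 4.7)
\emph{Proof plan.} We establish the cycle $(1) \Rightarrow (2) \Rightarrow (3) \Rightarrow (1)$. The first implication is immediate: a $\Gamma$-supercyclic vector $x$ is itself $\Gamma$-recurrent, since its $\Gamma$-orbit is dense in $X$ and therefore accumulates at $x$; density of the set of $\Gamma$-supercyclic vectors then gives density of $\Gamma$-recurrent vectors.

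For $(2) \Rightarrow (3)$, I would mimic the argument $(4) \Rightarrow (3)$ in the proof of Theorem \ref{FTransitive_unrooted case}, carrying the scalars $\lambda \in \Gamma$ along. Fix a finite $F \subset V$ and $N \in \N$, set $g = \sum_{v \in F} e_v$, and use density of $\Gamma$-recurrent vectors to pick $f \in \ell^p(V,\mu)$ with $\|f - g\|_{p,\mu} < \tfrac{1}{2N}$ together with a pair $(n,\lambda) \in \N \times (\Gamma \setminus \{0\})$, $n$ arbitrarily large, satisfying $\|\lambda B^n f - g\|_{p,\mu} < \tfrac{1}{2N}$. Choosing $n$ so that $\Chi^n(F) \cap F = \p^n(F) \cap F = \emptyset$, the reverse-Hölder computation of \eqref{eqq43}--\eqref{eqq44} applied to $\sum_{u \in \Chi^n(v)} |f(u)\mu_u|^p$, but now using the lower bound $|\lambda| \sum_{u \in \Chi^n(v)} |f(u)| > 1/2$ coming from $|(\lambda B^n f)(v) - 1| < 1/(2N|\mu_v|)$, yields
\[
\Big(\sum_{u \in \Chi^n(v)} \frac{|\lambda|^{p^\ast}}{|\mu_u|^{p^\ast}}\Big)^{1/p^\ast} > N.
\]
The analogous contradiction argument at $\p^n(v)$, parallel to the treatment of $J(F,N)$ in Theorem \ref{FTransitive_unrooted case} and using $|\lambda \mu_{\p^n(v)}|(1 - |\sum h(u)|) \leq 1/(2N)$, yields
\[
\frac{1}{|\lambda \mu_{\p^n(v)}|^{p^\ast}} + \sum_{u \in \Chi^n(\p^n(v))} \frac{1}{|\mu_u|^{p^\ast}} > N^{p^\ast}.
\]
Exhausting $V$ by an increasing sequence $(F_j)$ of finite sets and selecting $(n_j, \lambda_j)$ with the two inequalities above holding for $F = F_j$ and $N = j$ then produces a single sequence satisfying $(3)$.

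For $(3) \Rightarrow (1)$, I would invoke a $\Gamma$-supercyclicity criterion of Hypercyclicity-Criterion type: if $X_0$ is dense in $X$, $(n_k) \subset \N$, $(\lambda_k) \subset \Gamma \setminus \{0\}$, and there exist maps $I_k, S_k : X_0 \to X$ such that $I_k x \to x$, $\lambda_k B^{n_k} I_k x \to 0$, $S_k x \to 0$, and $\lambda_k B^{n_k} S_k x \to x$ for every $x \in X_0$, then $B$ is $\Gamma$-supercyclic. Take $X_0 = \mathrm{span}\{e_v : v \in V\}$ and the sequences from $(3)$. For $I_k$, mimic the dichotomy \eqref{eqq40}--\eqref{eqq41}: if $1/|\lambda_k \mu_{\p^{n_k}(v)}|^{p^\ast}$ is at least half of the sum in the second condition of $(3)$, set $I_k e_v = e_v$, so that $\lambda_k B^{n_k} I_k e_v = \lambda_k e_{\p^{n_k}(v)}$ has norm $|\lambda_k \mu_{\p^{n_k}(v)}| \to 0$; otherwise use Lemma \ref{Lemma_Grosse} to pick $h_{v,k}$ on $\Chi^{n_k}(\p^{n_k}(v))$ with $\sum_u h_{v,k}(u) = 1$ and $\|h_{v,k}\|_{p,\mu}$ near the infimum, and set $I_k e_v = e_v - h_{v,k}$, so that $B^{n_k} I_k e_v = 0$ and $\|I_k e_v - e_v\|_{p,\mu} \to 0$. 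For $S_k$, use the first condition of $(3)$: pick $g_{v,k}$ supported on $\Chi^{n_k}(v)$ with $\sum_u g_{v,k}(u) = 1$ and $\|g_{v,k}\|_{p,\mu}$ near the infimum, and set $S_k e_v = \lambda_k^{-1} g_{v,k}$; then $\lambda_k B^{n_k} S_k e_v = e_v$ and $\|S_k e_v\|_{p,\mu} \approx |\lambda_k|^{-1}\big(\sum_{u \in \Chi^{n_k}(v)} 1/|\mu_u|^{p^\ast}\big)^{-1/p^\ast} \to 0$ by the first condition of $(3)$.

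The main obstacle is formulating and justifying the $\Gamma$-supercyclicity criterion itself, which is not spelled out in the excerpt; however the standard Baire-category / universality argument underlying the classical Hypercyclicity Criterion adapts verbatim, since each multiplication $f \mapsto \lambda_k f$ is a homeomorphism and $\lambda_k \neq 0$ is ensured by $\Gamma \setminus \{0\}$. The remaining work — linear extension of $I_k, S_k$ to all of $\mathcal{D}$ and the routine $\ell^p$ estimates on finite linear combinations — transcribes directly from the corresponding portion of the proof of Theorem \ref{FTransitive_unrooted case}.
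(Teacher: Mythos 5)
Your proposal is correct and coincides with the proof the paper intends: the paper explicitly omits the argument, saying it follows Theorem \ref{FTransitive_unrooted case} with $\mathcal{F}$ taken to be the family of infinite subsets of $\N_0$ and with the $\Gamma$-supercyclicity Criterion (Theorem \ref{Criterion}) replacing the $\mathcal{F}$-transitivity Criterion, and your implications $(2)\Rightarrow(3)$ and $(3)\Rightarrow(1)$ transcribe exactly those computations with the scalars $\lambda_k$ carried along (your $S_k$ differs from the paper's by the harmless normalization $S_k=\lambda_k^{-1}S_{n_k}$). One small repair in $(1)\Rightarrow(2)$: density of the set of $\Gamma$-supercyclic vectors is not obvious for a general $\Gamma\subset\C$, but it is not needed --- once the $\Gamma$-supercyclic vector $x$ is seen to be a $\Gamma$-recurrent vector (with strictly increasing $n_k$, available because $\{\lambda B^nx:\lambda\in\Gamma,\ n\le N\}$ lies in a finite union of lines and is therefore nowhere dense), the invariance of the set of $\Gamma$-recurrent vectors under $B$ and under multiplication by nonzero scalars shows that it contains the dense set $\Orb((\Gamma\setminus\{0\})x,B)$.
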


The proof of the $\Gamma$-supercyclicity criterion presented below is a straightforward adaptation of the proof of the supercyclicity criterion given in \cite[Theorem 1.14]{BaMa}.
\begin{theorem}[$\Gamma$-supercyclicity  Criterion]
 Let $X$ be an infinite-dimensional Banach space, $T\in \mathcal{L}(X)$ and let $\Gamma\subset\C$ be such that $\Gamma\setminus\{0\}$ is non-empty. Assume that there exist an increasing sequence of positive integers $(n_k)_{k\geq1}$, a sequence $(\lambda_k)_{k\geq1}$ in $\Gamma\setminus\{0\}$, two dense subsets $X_0$ and $Y_0$ in $X$, and two families of applications $I_{n_k}:X_0\to X$ and $S_{n_k}:Y_0\to X$ such that, for any $x\in X_0$ and any $y\in Y_0$, the following conditions hold:
 \begin{enumerate}[label=$(\alph*)$]
     \item $I_{n_k}x\underset{k\to+\infty}{\longrightarrow} x$;
     \item $\lambda_k T^{n_k} I_{n_k}x\underset{k\to+\infty}{\longrightarrow} 0$;
     \item $\dfrac{1}{\lambda_k} S_{n_k} y\underset{k\to+\infty}{\longrightarrow} 0$;
     \item $T^{n_k}S_{n_k} y\underset{k\to+\infty}{\longrightarrow} y$.
 \end{enumerate}
 Then $T\oplus T$ is $\Gamma$-supercyclic.
 \label{Criterion}
\end{theorem}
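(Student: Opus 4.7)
The plan is to follow the classical Bayart--Matheron recipe for the Supercyclicity Criterion (see \cite[Theorem 1.14]{BaMa}): first use the four limit conditions to produce a joint transitivity-type property for $T\oplus T$ under the $\Gamma$-scaled action, and then conclude by a Baire category argument that $T\oplus T$ admits a dense $G_\delta$ set of $\Gamma$-supercyclic vectors in $X\oplus X$ (implicitly using the separability of $X$, which is standard in this setting).

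\emph{Step 1: explicit approximating vectors.} For arbitrary $x_1,x_2\in X_0$ and $y_1,y_2\in Y_0$, I would define
\[
u_k := I_{n_k}(x_1) + \frac{1}{\lambda_k}\,S_{n_k}(y_1), \qquad v_k := I_{n_k}(x_2) + \frac{1}{\lambda_k}\,S_{n_k}(y_2), \qquad k\in\N.
\]
Conditions $(a)$ and $(c)$ give $u_k\to x_1$ and $v_k\to x_2$, while $(b)$ and $(d)$, combined with the identity
\[
\lambda_k T^{n_k} u_k = \lambda_k T^{n_k}I_{n_k}(x_1) + T^{n_k}S_{n_k}(y_1),
\]
yield $\lambda_k T^{n_k}u_k \to y_1$, and symmetrically $\lambda_k T^{n_k}v_k \to y_2$. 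Crucially, $(n_k)$ and $(\lambda_k)$ are the \emph{same} on both coordinates, because they are fixed in the statement of the criterion before any choice of $x_i$ or $y_i$. Using the density of $X_0$ and $Y_0$ in $X$, this shows that for any nonempty open subsets $\mathcal{U}_1,\mathcal{U}_2,\mathcal{V}_1,\mathcal{V}_2$ of $X$, there exists $k\in\N$ with $\lambda_k T^{n_k}(\mathcal{U}_i)\cap \mathcal{V}_i\neq\emptyset$ for $i=1,2$ simultaneously.

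\emph{Step 2: Baire category argument.} Let $(\mathcal{W}_m)_{m\geq 1}$ be a countable base of nonempty open subsets of $X\oplus X$, and set
\[
A_m := \bigcup_{n\in\N,\ \lambda\in\Gamma\setminus\{0\}} (T\oplus T)^{-n}\bigl(\tfrac{1}{\lambda}\mathcal{W}_m\bigr).
\]
Each $A_m$ is open as a union of open sets (the factor $\lambda\neq 0$ makes each $\tfrac{1}{\lambda}\mathcal{W}_m$ well-defined and open, and $T\oplus T$ is continuous). Applying Step~1 to a product $\mathcal{U}_1\times\mathcal{U}_2$ contained in any prescribed nonempty open $\mathcal{U}\subset X\oplus X$ and a product $\mathcal{V}_1\times\mathcal{V}_2\subset \mathcal{W}_m$ shows that $A_m$ meets $\mathcal{U}$; hence $A_m$ is dense. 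By Baire's theorem, $\bigcap_m A_m$ is a dense $G_\delta$ subset of $X\oplus X$, and every element of it is a $\Gamma$-supercyclic vector for $T\oplus T$.

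The only genuine technical point is Step~1, namely exploiting the fact that the criterion provides a \emph{single} pair of sequences $(n_k,\lambda_k)$ working uniformly over $x\in X_0$ and $y\in Y_0$, so that the two coordinates of $T\oplus T$ can be driven to independent targets simultaneously. Once this joint transitivity is secured, Step~2 is a routine adaptation of the classical Birkhoff-type argument to the $\Gamma$-scaled action.
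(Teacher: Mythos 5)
Your proof is correct and follows exactly the route the paper intends: the paper omits the proof, stating only that it is a straightforward adaptation of the Supercyclicity Criterion in \cite[Theorem 1.14]{BaMa}, and your two steps (the joint approximating vectors $I_{n_k}(x_i)+\tfrac{1}{\lambda_k}S_{n_k}(y_i)$ followed by the Baire category argument for the $\Gamma$-scaled action) are precisely that adaptation. The only point worth recording is the one you already flag, namely that separability of $X$ is implicitly assumed for the countable base in Step~2.
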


In the case of rooted trees, we can deduce from Corollary \ref{hc_rooted} that for any bounded subset $\Gamma$ of $\C\setminus\{0\}$, a backward shift operator $B$ is hypercyclic if and only if it is $\Gamma$-supercyclic.  In addition, in the case where $\Gamma$ is  an unbounded subset of $\C$, $B$ is always $\Gamma$-supercyclic, because in this case $B$ has a dense generalized kernel and a dense range, see Proposition \ref{gen_ker}  below. 

The generalized kernel of  a bounded linear operator $T$ defined on some Banach space $X$ is the subspace $$\ker^\ast(T):=\bigcup_{n=1}^{+\infty}\ker(T^n).$$
In \cite[Corollary 3.3]{BBP}, it was proven that an operator with a dense generalized kernel is supercyclic if and only if it has a dense range. 
\begin{proposition}\label{gen_ker}
Let $X$ be a separable infinite-dimensional Banach space and $T\in\mathcal{L}(X)$ with a dense generalized kernel. Then the following assertions are equivalent:
\begin{enumerate}[label={$(\arabic*)$}]
    \item For any unbounded subset $\Gamma$ of $\C$, $T$ is $\Gamma$-supercyclic. 
    \item There exists an unbounded subset $\Gamma$ of $\C$ such that $T$ is $\Gamma$-supercyclic. 
    \item $T$ has a dense range.
\end{enumerate}
\end{proposition}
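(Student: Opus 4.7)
The plan is to prove the cyclic chain $(1)\Rightarrow(2)\Rightarrow(3)\Rightarrow(1)$. The first implication is immediate by taking $\Gamma=\C$. For $(2)\Rightarrow(3)$, suppose $T$ admits a $\Gamma$-supercyclic vector $x$ for some unbounded $\Gamma\subset\C$ and decompose the orbit as
\[
\Orb(\Gamma x,T)=\{\lambda x:\lambda\in\Gamma\}\cup\{\lambda T^nx:\lambda\in\Gamma,\,n\geq1\},
\]
the second piece sitting inside $T(X)$. If $T(X)$ were not dense, the open set $X\setminus\overline{T(X)}$ would contain an open ball $B$; the portion of the orbit meeting $B$ lies entirely in $\{\lambda x:\lambda\in\Gamma\}\subset\C x$, and a one-dimensional subset cannot be dense in an open ball of the infinite-dimensional space $X$, a contradiction.

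For the main implication $(3)\Rightarrow(1)$, I would apply the $\Gamma$-supercyclicity criterion (Theorem \ref{Criterion}). Fix any unbounded $\Gamma\subset\C$. Take $X_0:=\ker^\ast(T)$, dense by hypothesis, set $n_k:=k$, and let $I_{n_k}$ be the identity on $X_0$. Conditions $(a)$ and $(b)$ then hold automatically: each $x\in X_0$ satisfies $T^Nx=0$ for some $N$, so $\lambda_kT^{n_k}I_{n_k}x=0$ as soon as $n_k\geq N$, irrespective of the choice of $\lambda_k$.

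The nontrivial step is the construction of $S_{n_k}$. A short closure-chain argument upgrades density of $T(X)$ to density of $T^n(X)$ for every $n$, so given a countable dense set $Y_0=\{y_j\}_{j\geq1}\subset X$ (available since $X$ is separable), for every $k$ and every $j\leq k$ I can pick $z^{(k)}_j\in X$ with $\|T^{n_k}z^{(k)}_j-y_j\|<1/k$. The single essential use of the unboundedness of $\Gamma$ is then to select $\lambda_k\in\Gamma\setminus\{0\}$ satisfying $|\lambda_k|\geq k\max_{j\leq k}\|z^{(k)}_j\|$. Defining $S_{n_k}y_j:=z^{(k)}_j$ for $j\leq k$ and $S_{n_k}y_j:=0$ otherwise, conditions $(c)$ and $(d)$ hold for every fixed $y_j$ as $k\to+\infty$, since only the tail $k\geq j$ is relevant. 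Theorem \ref{Criterion} then yields $\Gamma$-supercyclicity of $T\oplus T$, hence of $T$. The main obstacle I foresee is an apparent circularity — the criterion fixes $(n_k)$ and $(\lambda_k)$ before the preimages $z^{(k)}_j$ are chosen, whose norms are a priori uncontrolled — and the resolution is to pick the $z^{(k)}_j$ first and only afterwards select $\lambda_k\in\Gamma$ large enough to absorb their norms, which is exactly what the unboundedness of $\Gamma$ permits.
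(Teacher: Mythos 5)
Your proof is correct. The chain $(1)\Rightarrow(2)\Rightarrow(3)$ matches the paper, which simply declares these implications clear; your argument for $(2)\Rightarrow(3)$ (the orbit minus its $\C x$-part lies in $T(X)$, and a one-dimensional closed subspace cannot be dense in a ball) is a legitimate way to make the second one explicit. For $(3)\Rightarrow(1)$ the underlying mechanism is the same as the paper's --- approximate in the source open set by a vector of the generalized kernel, correct it by $\frac{1}{\lambda}y$ where $T^n y$ hits the target, and use unboundedness of $\Gamma$ to make $\frac{1}{\lambda}y$ negligible --- but the packaging differs: the paper runs a direct four-open-set transitivity computation for $T\oplus T$, whereas you route everything through Theorem \ref{Criterion}, building $I_{n_k}=\mathrm{id}$ on $\ker^\ast(T)$ and a diagonal family $S_{n_k}$ on a countable dense set, choosing the preimages $z^{(k)}_j$ first and only then selecting $\lambda_k\in\Gamma$ large enough to absorb their norms. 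Your version makes explicit use of separability (to get the countable $Y_0$ needed for the diagonal choice of $\lambda_k$), which the paper's direct argument does not need since it picks $\lambda$ afresh for each quadruple of open sets; in exchange, the criterion-based route hands you the conclusion for $T\oplus T$ without having to invoke a Birkhoff-type transitivity theorem for $\Gamma$-supercyclicity at the end. Both are sound.
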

\begin{proof}
It is clear that $(1)\Rightarrow (2)\Rightarrow (3)$. Let us show that $(3)\Rightarrow (1)$. Assume that $T$ has a dense range. Let $\Gamma$ be an unbounded subset of $\C$. We will show that $T\oplus T$ is $\Gamma$-supercyclic. Let $U_1$, $U_2$, $V_1$, $V_2$ be nonempty open subsets of $X$. Since $\ker^\ast(T)$ is dense in $X$, there exist $(x_1,x_2)\in U_1\times U_2$ such that $T^nx_1=T^nx_2=0$ for some $n\in\N$. Since $T$ has a dense range, $T^n$ also is, therefore, there exist $y_1,y_2\in X$ such that $(T^ny_1,T^ny_2)\in V_1\times V_2$. By the unboundedness of $\Gamma$, there exists $\lambda\in\Gamma$ such that
$$\dfrac{1}{\lambda}(y_1,y_2)+(x_1,x_2)\in U_1\times U_2,$$
and
$$\lambda(T\oplus T)^n(\dfrac{1}{\lambda}(y_1,y_2)+(x_1,x_2))=(T^ny_1,T^ny_2)+\lambda(T^nx_1,T^nx_2)\in V_1\times V_2.$$
Hence $T\oplus T$ is $\Gamma$-supercyclic. 
\end{proof}

\section{Zero-one law limit point}
In \cite{CS}, Chan and Seceleanu showed that any weighted unilateral/bilateral backward shift that has an orbit with a nonzero limit point is hypercyclic. However, comparing the characterization of hypercyclicity for backward shift operators on rooted directed trees 
with the following theorem, we deduce that Chan and Seceleanu's result does not hold for  rooted directed trees.

\begin{theorem}\label{recu_tree_shift}
 Let $(V,E)$ be a rooted directed tree and let $\mu=(\mu_v)_{v\in V}$ be a weight on $V$.  Let $X=\ell^p(V,\mu)$, $1\leq p <+\infty$, or $X=c_0(V,\mu)$ and suppose  that the  backward shift $B$ is a bounded operator on $X$. The following conditions are equivalent:
 \begin{enumerate}[label={$(\arabic*)$}]
 \item $B$ has an orbit with a nonzero limit point.
  \item $B$ has an orbit with a nonzero weak limit point.
  \item $B$ has an orbit with  $e_{\ro}$ as a limit point.
 \item $B$ has an orbit with  $e_{\ro}$ as a weak limit point.
 \item  There are an increasing sequence $(n_k)_{k}$ of positive integers and a vertex $v\in V$ such that
 $$\begin{cases}
\underset{u \in \Chi^{n_k}(v)}{\sum}\dfrac{1}{|\mu_u|^{p^{\ast}}}\underset{k\to+\infty}{\longrightarrow}+\infty & \text{ if } X=\ell^p(V,\mu),\, 1< p<+\infty;\\
 &\\
\underset{u\in\Chi^{n_{k}}(v)}{\inf}\,|\mu_u|\underset{k\to+\infty}{\longrightarrow}0 &\text{ if } X=\ell^1(V,\mu); \\
&\\
\underset{u \in \Chi^{n_k}(v)}{\sum}\dfrac{1}{|\mu_u|}\underset{k\to+\infty}{\longrightarrow}+\infty&\text{ if } X=c_0(V,\mu).
\end{cases}$$
  \item  There is an increasing sequence $(n_k)_{k}$ of positive integers  such that
  $$\begin{cases}
\underset{u \in \Chi^{n_k}(\ro)}{\sum}\dfrac{1}{|\mu_u|^{p^{\ast}}}\underset{k\to+\infty}{\longrightarrow}+\infty & \text{ if } X=\ell^p(V,\mu),\, 1< p<+\infty;\\
 &\\
\underset{u\in\Chi^{n_{k}}(\ro)}{\inf}\,|\mu_u|\underset{k\to+\infty}{\longrightarrow}0 &\text{ if } X=\ell^1(V,\mu); \\
&\\
\underset{u \in \Chi^{n_k}(\ro)}{\sum}\dfrac{1}{|\mu_u|}\underset{k\to+\infty}{\longrightarrow}+\infty&\text{ if } X=c_0(V,\mu),\,
\end{cases}.$$ 
\item There exist an increasing sequence $(n_k)_{k\in\N}$ of positive integers and a vertex $v\in V$ such that, for any $l\in\N_0$,
 $$\begin{cases}
\underset{u \in \Chi^{n_k+l}(v)}{\sum}\dfrac{1}{|\mu_u|^{p^{\ast}}}\underset{k\to+\infty}{\longrightarrow}+\infty & \text{ if } X=\ell^p(V,\mu),\, 1< p<+\infty;\\
 &\\
\underset{u\in\Chi^{n_{k}+l}(v)}{\inf}\,|\mu_u|\underset{k\to+\infty}{\longrightarrow}0 &\text{ if } X=\ell^1(V,\mu); \\
&\\
\underset{u \in \Chi^{n_k+l}(v)}{\sum}\dfrac{1}{|\mu_u|}\underset{k\to+\infty}{\longrightarrow}+\infty&\text{ if } X=c_0(V,\mu).
\end{cases}$$
 \end{enumerate}
\end{theorem}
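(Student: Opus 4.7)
The plan is to close the main cycle $(3)\Rightarrow(1)\Rightarrow(2)\Rightarrow(5)\Rightarrow(6)\Rightarrow(3)$, to handle $(3)\Leftrightarrow(4)$ through a weak-to-strong argument, and to treat $(5)\Leftrightarrow(7)$ separately. I will focus on $X=\ell^p(V,\mu)$ with $1<p<\infty$; the cases $X=\ell^1(V,\mu)$ and $X=c_0(V,\mu)$ follow by analogous arguments using the corresponding formulas of Lemma~\ref{Lemma_Grosse}. All of $(3)\Rightarrow(1)$ (since $e_\ro\neq 0$), $(1)\Rightarrow(2)$ and $(3)\Rightarrow(4)$ (strong implies weak), $(4)\Rightarrow(2)$ (specialization), $(6)\Rightarrow(5)$ (take $v=\ro$), and $(7)\Rightarrow(5)$ (take $l=0$) are immediate from the definitions.

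For $(2)\Rightarrow(5)$ I would use H\"older's inequality. Weak convergence $B^{n_k}f\to g\neq 0$ implies coordinatewise convergence, so one may select a vertex $v$ with $g(v)=c\neq 0$; then $\sum_{u\in\Chi^{n_k}(v)}f(u)=(B^{n_k}f)(v)\to c$. Since $f\in\ell^p(V,\mu)$ and the sets $\Chi^{n_k}(v)$ eventually leave every finite subset of $V$, one has $\sum_{u\in\Chi^{n_k}(v)}|f(u)\mu_u|^p\to 0$; the inequality
\[
|c|\leq\Bigl(\sum_{u\in\Chi^{n_k}(v)}|f(u)\mu_u|^p\Bigr)^{1/p}\Bigl(\sum_{u\in\Chi^{n_k}(v)}\frac{1}{|\mu_u|^{p^{\ast}}}\Bigr)^{1/p^{\ast}}
\]
then forces the second factor to diverge. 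The implication $(5)\Rightarrow(6)$ follows from $\Chi^{n_k}(v)\subset\Chi^{n_k+d}(\ro)$ with $d=\mathrm{depth}(v)$. For $(6)\Rightarrow(3)$ I would apply Lemma~\ref{Lemma_Grosse} to produce nonnegative $f_k$ supported on $\Chi^{n_k}(\ro)$ with $\sum_u f_k(u)=1$ and $\|f_k\|_{p,\mu}\to 0$; the key structural fact (peculiar to rooted trees) is $B^{n_k}f_k=e_\ro$, because $\Chi^{n_k}(\ro)$ and $\Chi^{n_k}(v)$ are disjoint for $v\neq\ro$ (they sit at different depths from the root). After extracting a subsequence with $\|B\|^{n_{j-1}}\|f_j\|_{p,\mu}\leq 2^{-j}$, the vector $f=\sum_j f_j$ satisfies $B^{n_j}f=e_\ro+\sum_{k>j}B^{n_j}f_k$ (the $k<j$ terms vanish since $B^{n_j-n_k}e_\ro=0$), so $B^{n_j}f\to e_\ro$ strongly. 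For $(4)\Rightarrow(3)$, weak convergence at $\ro$ already gives $(6)$ via the H\"older estimate, hence $(3)$.

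The hardest part will be $(5)\Rightarrow(7)$. The obstacle is that the naive choice $v=v_0$ and $(n_k)$ from $(5)$ may fail for $l\geq 1$: for instance, when the mass at $\Chi^{n_k}(v_0)$ is concentrated on leaves, $\Chi^{n_k+1}(v_0)$ can be empty. My plan is a K\"onig-style descent. Setting $\sigma_n^{(w)}=\sum_{u\in\Chi^n(w)}1/|\mu_u|^{p^{\ast}}$, define $T'=\{w\in V:\sigma_n^{(w)}\to\infty\text{ along some subsequence}\}$; this set contains $v_0$ and is closed under taking ancestors. Using the decomposition $\sigma_n^{(v)}=\sum_{w\in\Chi(v)}\sigma_{n-1}^{(w)}$ together with pigeonhole (when $\Chi(v)$ is finite) or a more delicate extraction otherwise, I would descend from $v_0$ either to reach a vertex $v$ where the sum condition holds at every shift $l\geq 0$ along some subsequence, or to construct an infinite descending path in $T'$ from which a diagonal extraction yields the required $v$ and $(m_k)$ satisfying $(7)$ for all $l\in\N_0$ simultaneously. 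Boundedness of $B$ (which forces $\sigma_{n+1}^{(v)}\leq\|B\|^{p^{\ast}}\sigma_n^{(v)}$) propagates divergence backward in depth, while the descent handles the forward direction.
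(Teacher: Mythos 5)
Your treatment of the cycle $(2)\Rightarrow(5)\Rightarrow(6)\Rightarrow(3)$ together with the trivial implications is essentially identical to the paper's proof (same H\"older estimate, same use of Lemma~\ref{Lemma_Grosse} and of the disjointness of the levels $\Chi^{n_k}(\ro)$, same series construction with a norm-growth control on the subsequence), and it is correct. The only part where you genuinely diverge is $(5)\Rightarrow(7)$, and that is where there is a gap.

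Your K\"onig-style descent is built on the decomposition $\sigma_n^{(v)}=\sum_{w\in\Chi(v)}\sigma_{n-1}^{(w)}$ plus pigeonhole, but pigeonhole breaks at a vertex with infinitely many children: divergence of $\sigma_{n_k}^{(v)}$ can come from mass spread over infinitely many children, each of which has $\sigma_{n_k-1}^{(w)}$ bounded, so no child can be selected and the descent stalls at the very first step. The ``more delicate extraction otherwise'' and the ``infinite descending path with diagonal extraction'' are not specified, and I do not see how to complete them. More importantly, the descent is unnecessary: the whole ``forward direction'' problem you are trying to solve disappears if you change the \emph{sequence} rather than the \emph{vertex}. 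You already wrote down the key inequality $\sigma_{n+1}^{(v)}\leq \|B\|^{p^{\ast}}\sigma_{n}^{(v)}$ (equivalently, iterating Proposition~\ref{Bounded_Back}, $\sum_{u\in\Chi^{m}(w)}|\mu_u|^{-p^{\ast}}\leq \|B\|^{mp^{\ast}}|\mu_w|^{-p^{\ast}}$). Given $(5)$ with vertex $v$ and levels along which $\sigma^{(v)}$ diverges, set $C=\max\{2,\|B\|\}$ and re-index so that
\[
\Big(\sum_{u \in \Chi^{n_{k}+k}(v)}\dfrac{1}{|\mu_u|^{p^{\ast}}}\Big)^{1/p^{\ast}}\geq C^{2k},
\]
which is possible because only the values of $\sigma^{(v)}$ at the chosen deep levels matter. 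Then for a fixed $l$ and every $k>l$, backward propagation over $k-l$ generations gives
\[
\Big(\sum_{u \in \Chi^{n_{k}+l}(v)}\dfrac{1}{|\mu_u|^{p^{\ast}}}\Big)^{1/p^{\ast}}\geq C^{l-k}\Big(\sum_{u \in \Chi^{n_{k}+k}(v)}\dfrac{1}{|\mu_u|^{p^{\ast}}}\Big)^{1/p^{\ast}}\geq C^{k+l}\longrightarrow+\infty ,
\]
so the level $n_k+l$ always sits \emph{above} the deep level $n_k+k$ where divergence is guaranteed, and no forward (descent) step is ever needed. This is exactly the paper's argument. One caution if you carry it out: propagate backward using the iterated bound $\sum_{u\in\Chi^{k-l}(w)}|\mu_u|^{-p^{\ast}}\leq C^{(k-l)p^{\ast}}|\mu_w|^{-p^{\ast}}$ summed over $w\in\Chi^{n_k+l}(v)$, rather than summing the pointwise ratio bound $|\mu_{\p^{k-l}(u)}|^{-p^{\ast}}\geq C^{(l-k)p^{\ast}}|\mu_u|^{-p^{\ast}}$ over $u\in\Chi^{n_k+k}(v)$; the latter sum counts each ancestor $w$ with multiplicity $|\Chi^{k-l}(w)|$ and so does not directly bound $\sum_{w}|\mu_w|^{-p^{\ast}}$ from below.
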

\begin{proof}
It is enough to prove that $(2) \Rightarrow (5) \Rightarrow (6) \Rightarrow (3)$ and $(5)\Rightarrow (7)$. We will prove these implications when $X=\ell^p(V,\mu)$, with $1<p<+\infty$, the same reasoning works for the other cases. 
Let us show that $(2)$ implies $(5)$.  Let $f,g\in \ell^p(V,\mu)$ be non-zero vectors. Let $v_0\in V$ such that $g(v_0)\neq0$. Let $(\delta_k)_{k}$ be a decreasing sequence of positive numbers such that $2\delta_k<|g(v_0)|$. Let $(n_k)_{k}$ be an increasing sequence of non-negative integers such that
$$|\langle B^{n_k}f- g, e_{v_0}\rangle|<\delta_k,$$
that is
$$|(B^{n_k}f)(v_0)-g(v_0)|<\delta_k,$$
thus
\begin{equation}\label{eq4}
0<\dfrac{|g(v_0)|}{2}<|g(v_0)|-\delta_k< \sum_{u\in\Chi^{n_k}(v_0)}|f(u)|.
\end{equation}
 Since  $\|f\|_{p,\mu}<+\infty$, we obtain
 $$\sum_{k\geq0}\sum_{u\in\Chi^{n_k}(v_0)}|f(u)\mu_u|^p<+\infty,$$
 hence
 \begin{equation}\label{eq5}
 \sum_{u\in\Chi^{n_k}(v_0)}|f(u)\mu_u|^p\underset{k\to+\infty}{\longrightarrow}0.
 \end{equation}
 By the reverse H\"{o}lder's inequality ($p>1$), we get
 \begin{align*}
 \sum_{u\in\Chi^{n_k}(v_0)}|f(u)\mu_u|^p &\geqslant \Big(\sum_{u\in\Chi^{n_k}(v_0)}|f(u)|\Big)^{p} \Big(\sum_{u\in\Chi^{n_k}(v_0)}|\mu_u|^{\frac{p}{1-p}}\Big)^{1-p}\\
 &\overset{\eqref{eq4}}{\geqslant} \dfrac{|g(v_0)|^p}{2^p} \Big(\sum_{u\in\Chi^{n_k}(v_0)}|\mu_u|^{\frac{p}{1-p}}\Big)^{1-p}
 \end{align*}
hence
 $$\Big(\sum_{u\in\Chi^{n_k}(v_0)}|\mu_u|^{-p^{\ast}}\Big)^{-1/p^{\ast}}\leqslant \dfrac{2}{|g(v_0)|} \Big(\sum_{u\in\Chi^{n_k}(v_0)}|f(u)\mu_u|^p\Big)^{1/p},$$
 combining this with \eqref{eq5}, we get
  $$\sum_{u \in \Chi^{n_k}(v_0)}\dfrac{1}{|\mu_u|^{p^{\ast}}}\underset{k\to+\infty}{\longrightarrow}+\infty.$$
  
Let us show now that $(5)$ implies $(6)$. Suppose that $(5)$ holds for some sequence $(n_k)_k$ and $v\in V$. Let $m\in\N$ be such that $v\in\Chi^{m}(\ro)$. Set $m_k=n_k+m$. Thus $\Chi^{n_k}(v)\subset \Chi^{m_k}(\ro)$ and hence
$$\sum_{u \in \Chi^{n_k}(v)}\dfrac{1}{|\mu_u|^{p^{\ast}}}\leqslant \sum_{u \in \Chi^{m_k}(\ro)}\dfrac{1}{|\mu_u|^{p^{\ast}}},$$
since the term at the left goes to infinity, as $k \to +\infty$,  we get
$$ \sum_{u \in \Chi^{m_k}(\ro)}\dfrac{1}{|\mu_u|^{p^{\ast}}} \underset{k\to+\infty}{\longrightarrow}+\infty.$$

Let us show that $(6)$ implies $(3)$.  Let $(n_k)_{k\geq0}$ be an increasing sequence of positive integers  such that
  $$\sum_{u \in \Chi^{n_k}(\ro)}\dfrac{1}{|\mu_u|^{p^{\ast}}}\underset{k\to+\infty}{\longrightarrow}+\infty.$$  
Set $J_k=\Chi^{n_k}(\ro)$, for $k\in\N$.  By Lemma \ref{Lemma_Grosse},  we obtain
$$\underset{\|x\|_1=1}{\inf}\Big(\sum_{u\in J_k}|x_u\mu_u|^p\Big)^{1/p}\underset{k\to+\infty}{\longrightarrow}0$$
 There exists then an increasing sequence $(k_j)_{j\geq0}$ of positive integers such that, for each $j\in\N_0$, we have
 $$\underset{\|x\|_1=1}{\inf}\Big(\sum_{u\in J_{k_j}}|x_u\mu_u|^p\Big)^{1/p}\leqslant\dfrac{1}{2^{j+1}c_j}, \quad\text{ where } \quad c_j=\max\lbrace 1; \|B^{n_{k_l}}\|, 0\leqslant l\leqslant j-1\rbrace,$$
 hence, there exists $f_{k_j}\in \K^{V}$, of support in $J_{k_j}=\Chi^{n_{k_j}}(\ro)$ such that
 \begin{equation}\label{eq6}
 \|f_{k_j}\|_1=\sum_{u\in \Chi^{n_{k_j}}(\ro)}|f_{k_j}(u)|=1,
 \end{equation}
 and
 \begin{equation}\label{eq7}
 \Big(\sum_{u\in J_{k_j}}|f_{k_j}(u)\mu_u|^p\Big)^{1/p}\leqslant\dfrac{1}{2^{j}c_j}.
 \end{equation}
 For each $u\in V$, set
 $$
 f(u)=\begin{cases}
 g_j(u) &\text{ if } u\in \Chi^{n_{k_j}}(\ro) \text{ for some } j\\
0 &\text{ otherwise }
\end{cases},\quad \text{where } g_j=|f_{k_j}|.
$$
By \eqref{eq7}, we obtain  
$$
\|f\|_{p,\mu}^{p}=\sum_{u\in V}|f(u)\mu_u|^p=\sum_{j\in\N_0}\sum_{u\in \Chi^{n_{k_j}}(\ro)}|f_{k_j}(u)\mu_u|^p \leqslant \sum_{j\in\N_0}\dfrac{1}{2^{pj}}<+\infty,
$$
thus $f\in \ell^p(V,\mu)$. Moreover, we have
\begin{align*}
\| B^{n_{k_j}}f-e_{\ro}\|_{p,\mu}&\leq \| B^{n_{k_j}}g_j-e_{\ro}\|_{p,\mu}+\sum_{ l=j+1} \| B^{n_{k_j}}g_l\|_{p,\mu}\\
&=\sum_{ l=j+1} \| B^{n_{k_j}}g_l\|_{p,\mu}\qquad (\text{by } \eqref{eq6})\\
&\leqslant \sum_{ l=j+1} \| B^{n_{k_j}}\| \|f_{k_l}\|_{p,\mu}\\
&< \sum_{ l=j+1}\dfrac{\| B^{n_{k_j}}\|}{2^{l}c_{l}} \qquad (\text{by } \eqref{eq7})\\
&\leqslant \sum_{ l=j+1}\dfrac{1}{2^{l}}\underset{j\to+\infty}{\longrightarrow}0, 
\end{align*}
hence condition $(3)$ holds.

 Let us show that $(5)$ implies $(7)$. Set $C=\max\lbrace 2, \|B\|\rbrace$. By assuming that $(5)$ holds, there exist  an increasing sequence $(n_{k})_{k\in\N}$ of positive integers and a vertex $v\in V$ such that
 \begin{equation}\label{eq1} 
 \Big(\sum_{u \in \Chi^{n_{k}+k}(v)}\dfrac{1}{|\mu_u|^{p^{\ast}}}\Big)^{-1/p^{\ast}}<C^{-2k}.
 \end{equation}
For every $u\in V\setminus\lbrace \ro\rbrace$, by Proposition \ref{Bounded_Back}, we obtain
$$C\geqslant   \Big(\sum_{v\in \Chi(\p(u))} \left|\frac{\mu_{\p(u)}}{\mu_v  }\right|^{p^\ast}\Big)^{1/p^{\ast}}\geqslant \left|\frac{\mu_{\p(u)}}{\mu_u }\right|,$$
that is, for every $u\in V\setminus\lbrace \ro\rbrace$,
\begin{equation}\label{eq2} 
\left|\frac{\mu_u }{\mu_{\p(u)}}\right|\geqslant C^{-1}.
\end{equation}
Fix now  $l\in\N_0$. Let $k\in\N$ be such that $k>l$.
Note that 
\begin{equation}\label{eq3} 
u\in \Chi^{n_{k}+k}(v) \Longleftrightarrow \p^{k-l}(u)\in \Chi^{n_{k}+l}(v),
\end{equation}
and for every $u\in \Chi^{n_{k}+k}(v)$, we have
$$\mu_{u}=\dfrac{\mu_u}{\mu_{\p(u)}}\times \dfrac{\mu_{\p(u)}}{\mu_{\p^2(u)}} \times \cdots\times \dfrac{\mu_{\p^{k-l-1}(u)}}{\mu_{\p^{k-l}(u)}}\times \mu_{\p^{k-l}(u)}.$$
By \eqref{eq2}, we obtain 
$$ \dfrac{1}{|\mu_{\p^{k-l}(u)}|^{p^{\ast}}}\geqslant C^{(l-k)p^{\ast}}\dfrac{1}{|\mu_{u}|^{p^{\ast}}}.$$
Combining this with \eqref{eq3}, we obtain
$$\Big(\sum_{u \in \Chi^{n_{k}+l}(v)}\dfrac{1}{|\mu_u|^{p^{\ast}}}\Big)^{1/p^{\ast}}\geqslant C^{l-k} \Big(\sum_{u \in \Chi^{n_{k}+k}(v)}\dfrac{1}{|\mu_u|^{p^{\ast}}}\Big)^{1/p^{\ast}}.$$
Thus
\begin{align*}
\Big(\sum_{u \in \Chi^{n_{k}+l}(v)}\dfrac{1}{|\mu_u|^{p^{\ast}}}\Big)^{-1/p^{\ast}}&\leqslant C^{k-l} \Big(\sum_{u \in \Chi^{n_{k}+k}(v)}\dfrac{1}{|\mu_u|^{p^{\ast}}}\Big)^{-1/p^{\ast}}\\
&\overset{\eqref{eq1}}{\leqslant} C^{k-l} C^{-2k}\leqslant C^{-k}  \underset{k\to+\infty}{\longrightarrow}0.
\end{align*}
 
\end{proof}

We now provide an example of a backward shift operator on a rooted directed tree that is not hypercyclic, but it has an orbit with a nonzero limit point.
\begin{example}\label{rooted_not_hc}
    Let $B$ be the backward shift operator given in Example \ref{back_not_hc}. It is clear that, by using Theorem \ref{FTransitive_rooted case}, $B$ is not hypercyclic. For every $N\in\mathbb{N}$, we have
     $$I(\ro,N):=\Big\{n\in\N_0:\, \Big(\sum_{u\in\Chi^n(\ro)} \dfrac{1}{|\mu_{u}|^{p^\ast}}\Big)^{1/p^\ast}>N\Big\}\in\mathcal{J},$$
    hence, by using Theorem \ref{recu_tree_shift}, we deduce that $B$ has an orbit with a non-zero limit point.
\end{example}

The following proposition provides a characterization of backward shift operators that have an orbit of a nonzero function with a nonzero limit point, specifically in the context of unrooted directed trees. We omit the proof as it can be derived through straightforward modifications of the arguments presented in the proofs of the preceding theorems.

\begin{proposition}\label{th_unrooted_tree}
Let $(V,E)$ be an unrooted directed tree and let $\mu=(\mu_v)_{v\in V}$ be a weight on $V$. Let $X=\ell^p(V,\mu)$, $1\leq p <+\infty$, or $X=c_0(V,\mu)$ and suppose  that the  backward shift $B$ is a bounded operator on $X$. The following conditions are equivalent:
\begin{enumerate}[label={$(\arabic*)$}]
\item  There exists a non-zero non-negative vector in $X$ whose orbit under $B$  has a nonzero limit point.
 \item There exist a  vertex $v\in V$ and a  non-zero non-negative vector in $X$ whose orbit under $B$ has $e_{v}$ as a limit point.
 \item  There exist an increasing sequence $(n_k)_{k}$ of positive integers and a vertex $v\in V$ such that
 $$\begin{cases}
\underset{u \in \Chi^{n_k}(v)}{\sum}\dfrac{1}{|\mu_u|^{p^{\ast}}}\underset{k\to+\infty}{\longrightarrow}+\infty & \text{ if } X=\ell^p(V,\mu),\, 1< p<+\infty;\\
 &\\
\underset{u\in\Chi^{n_{k}}(v)}{\inf}\,|\mu_u|\underset{k\to+\infty}{\longrightarrow}0 &\text{ if } X=\ell^1(V,\mu); \\
&\\
\underset{u \in \Chi^{n_k}(v)}{\sum}\dfrac{1}{|\mu_u|}\underset{k\to+\infty}{\longrightarrow}+\infty&\text{ if } X=c_0(V,\mu),
\end{cases}$$
  and, for every $i\in\N$, we have
$$\mu_{\p^{n_k-n_i}(v)}\underset{k\to+\infty}{\longrightarrow}0.$$ 
\end{enumerate}

\end{proposition}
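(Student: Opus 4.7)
The plan is to establish the chain $(2) \Rightarrow (1) \Rightarrow (3) \Rightarrow (2)$, presenting the argument for $X = \ell^p(V,\mu)$ with $1 < p < +\infty$ (the cases $X=\ell^1(V,\mu)$ and $X=c_0(V,\mu)$ are handled analogously, as in the previous theorems). The implication $(2) \Rightarrow (1)$ is trivial since $e_v \neq 0$.

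For $(1) \Rightarrow (3)$, take $f \geq 0$ in $X$ and a strictly increasing $(n_k)$ such that $B^{n_k}f \to g$ for some non-zero $g$. Since the $B^{n_k}f$ are non-negative, so is $g$, and one may choose $v \in V$ with $g(v) > 0$; after extracting a subsequence, assume $(B^{n_k}f)(v) > g(v)/2$ for every $k$. The sum condition at $v$ then follows by the reverse-Hölder argument used in the proof of Theorem \ref{recu_tree_shift}. For the parent condition, fix $i \in \N$ and set $w_k := \p^{n_k - n_i}(v)$ (well-defined and distinct for $k > i$ since $(V,E)$ is unrooted and $n_k - n_i$ is strictly increasing). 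Because $v \in \Chi^{n_k - n_i}(w_k)$, one has $\Chi^{n_i}(v) \subset \Chi^{n_k}(w_k)$, and non-negativity gives
\[
(B^{n_k}f)(w_k) = \sum_{u \in \Chi^{n_k}(w_k)} f(u) \geq (B^{n_i}f)(v) > g(v)/2.
\]
Combining the pointwise bound $|(B^{n_k}f)(w_k) - g(w_k)|\,|\mu_{w_k}| \leq \|B^{n_k}f - g\|_{p,\mu} \to 0$ with $|g(w_k)\mu_{w_k}| \to 0$ (since $g \in X$ and the $w_k$ are distinct) yields $(B^{n_k}f)(w_k)|\mu_{w_k}| \to 0$, hence $\mu_{w_k} \to 0$.

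For $(3) \Rightarrow (2)$, invoke Lemma \ref{Lemma_Grosse} to choose, for each $l$, a non-negative $f_l$ supported in $\Chi^{n_l}(v)$ with $\sum_u f_l(u) = 1$ and $\|f_l\|_{p,\mu}$ as small as desired. A direct computation on the tree shows $B^{n_l} f_l = e_v$, and consequently $B^{n_k} f_l = e_{\p^{n_k - n_l}(v)}$ whenever $l < k$, while for $l > k$ the vector $B^{n_k} f_l$ is supported in $\Chi^{n_l - n_k}(v)$ with norm controlled by $\|f_l\|_{p,\mu}$ times a power of $\|B\|$. By a standard diagonal extraction, one may pass to a subsequence of $(n_k)$ ensuring $\sum_{l < k} |\mu_{\p^{n_k - n_l}(v)}|^p \to 0$ as $k \to \infty$ (using the parent condition for each fixed $l$), while simultaneously arranging $\|f_l\|_{p,\mu}$ to decay fast enough that $\sum_{l > k} \|B^{n_k} f_l\|_{p,\mu}^p \to 0$. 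Setting $f := \sum_l f_l$, which is non-negative and in $X$, the supports of the three families $\{e_v\}$, $\{e_{\p^{n_k - n_l}(v)}\}_{l < k}$, $\{B^{n_k} f_l\}_{l > k}$ are pairwise disjoint, and
\[
B^{n_k}f - e_v \;=\; \sum_{l < k} e_{\p^{n_k - n_l}(v)} \;+\; \sum_{l > k} B^{n_k} f_l
\]
tends to $0$ in norm, giving $B^{n_k}f \to e_v$.

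The main obstacle is the parent condition in both directions; non-negativity is essential in each. In $(1) \Rightarrow (3)$ it produces the crucial lower bound $(B^{n_k}f)(w_k) \geq (B^{n_i}f)(v) > 0$ needed to extract information about $\mu_{w_k}$ from the norm convergence, while in $(3) \Rightarrow (2)$ it ensures that each $B^{n_k}f_l$ with $l<k$ is exactly concentrated on the single vertex $\p^{n_k - n_l}(v)$ (rather than merely close to it), so that the unwanted mass is localized precisely where the parent condition applies; without non-negativity, cancellations would render both arguments ineffective.
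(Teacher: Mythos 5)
Your proof is correct and is precisely the ``straightforward modification'' that the paper alludes to when it omits the argument: the reverse-H\"older extraction of the sum condition and the block construction $f=\sum_l f_l$ reproduce the proof of Theorem \ref{recu_tree_shift}, while the evaluation at the ancestors $\p^{n_k-n_i}(v)$ (where non-negativity supplies the lower bound $(B^{n_k}f)(\p^{n_k-n_i}(v))\geq (B^{n_i}f)(v)$) mirrors the $J(F,N)$ part of Theorem \ref{FTransitive_unrooted case}. The diagonal extraction you invoke is legitimate since the parent condition passes to subsequences and each step imposes only finitely many constraints, so no gap remains.
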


In the following example, we provide a non-hypercyclic bounded backward shift on an unrooted directed tree that has an orbit with a nonzero limit point, but it does not possess an orbit of a non-negative function with a nonzero limit point.

\begin{example}\label{example1}

Let $(V,E)$ be the following unrooted directed tree:
\begin{center}
\begin{tikzpicture}

\draw  (4.9,0.3) node {$\text{o}_0$};
\draw  (6,0.8) node {$v_1$};
\draw  (7,0.8) node {$v_2$};
\draw  (8,0.8) node {$v_3$};

\draw  (6,-0.8) node {$u_1$};
\draw  (7,-0.8) node {$u_2$};
\draw  (8,-0.8) node {$u_3$};

\draw  (4,0.3) node {$\text{o}_1$};
\draw  (3,0.3) node {$\text{o}_2$};
\draw  (2,0.3) node {$\text{o}_3$};

\draw (5,0) node {$\bullet$} ;
\draw (6,0.5) node {{\tiny$\bullet$}} ;

\draw (4,0) node {{\tiny$\bullet$}} ;
\draw (3,0) node {{\tiny$\bullet$}} ;
\draw (2,0) node {{\tiny$\bullet$}} ;

\draw (7,0.5) node {{\tiny$\bullet$}} ;
\draw (8,0.5) node {{\tiny$\bullet$}} ;

\draw (6,-0.5) node {{\tiny$\bullet$}} ;
\draw (7,-0.5) node {{\tiny$\bullet$}} ;
\draw (8,-0.5) node {{\tiny$\bullet$}} ;

% style des flèches
\tikzstyle{suite}=[->,>=stealth,thick,rounded corners=4pt]

%\draw[suite] (0) -- (1)  edge (2) edge (3) ;
%\draw (0,0) -- (1.5,-1) -- (3,-1)--(4.5,-1);
%\draw [dotted, thick] (3) -- (n-1);
%\draw [suite, thick] (n-1) -- (n);
%\draw [dotted, thick] (n) -- (9,0);

\draw[suite] (5,0) -- (6,0.5) edge (7,0.5) edge (8,0.5) ;
\draw[suite] (5,0) -- (6,-0.5) edge (7,-0.5) edge (8,-0.5);
\draw[suite] (2,0) -- (3,0) edge (4,0) edge (5,0);
\draw [dotted, thick] (8,0.5) -- (9.5,0.5);
\draw [dotted, thick] (8,-0.5) -- (9.5,-0.5);
\draw [dotted, thick] (0.5,0) -- (2,0);
\end{tikzpicture}
\end{center} 
that is
$$V:=\{u_k:\, k\in\N\}\cup \{v_k:\, k\in \N\}\cup \{ \text{o}_k:\, k\in\N_0\}.$$
Let $\mu=(\mu_v)_{v\in V}$ be the weight on $V$ defined by
$$\mu_{\text{o}_k}=1,\quad \mu_{u_k}=\dfrac{1}{2^k},\quad \text{and} \quad \mu_{v_k}=\dfrac{1}{2^k}.$$
Let $B$ be the backward shift on $\ell^p(V,\mu)$, $1<p<+\infty$. By Proposition \ref{Bounded_Back}, it is clear that $B$ is bounded and $\|B\|=2^{2-\frac{1}{p}}$. Now, consider any vertex $v\in V$, any increasing sequence $(n_k)_{k\in\N}$ of positive integers, and $i\in\N$, we have
$$\mu_{\p^{n_k-n_i}(v)}\underset{k\to+\infty}{\longrightarrow}1\neq0,$$
thus the  condition $(3)$ of Proposition \ref{th_unrooted_tree} does not hold. Consequently,  $B$ does not possess an orbit of a
non-negative function with a nonzero limit point. Moreover,  for any vertex $v\in V$ and any increasing sequence $(n_k)_{k\in\N}$ of positive integers, we have
$$\dfrac{1}{|\mu_{\p^{n_k}(v)}|^{p^\ast}}+\underset{u \in \Chi^{n_k}(\p^{n_k}(v))}{\sum}\dfrac{1}{|\mu_u|^{p^{\ast}}}\underset{k\to+\infty}{\centernot{\longrightarrow}}+\infty,$$
thus, by using   Theorem \ref{FTransitive_unrooted case} (when $\mathcal{F}$ is the Furstenberg family of infinite subsets of $\N_0$), we deduce that $B$ is not hypercyclic.

Let $f:V\longrightarrow\C$ be the function defined by
$$f(o_k)=0,\, f(u_{2^k})=1, \,f(v_{2^k})=-1,\,\text{and }  f(u_{j})=f(v_{j})=0,\text{ if } j\neq 2^k.$$
We have $f\in\ell^p(V,\mu)$, since
$$\|f\|_{p,\mu}^{p}=\sum_{v\in V}|f(v)|^p\mu_{v}^{p}=2\sum_{k\geq0}\dfrac{1}{2^{p2^k}}<+\infty.$$
Let us show that $B^{2^k-1}f\underset{k\to+\infty}{\longrightarrow}e_{u_1}-e_{v_1}$.
One has
\begin{align*}
 \| B^{2^k-1}f-e_{u_1}+e_{v_1}\|_{p,\mu}^p&=\sum_{j\geq0}|(B^{2^k-1}f)(o_j)|^p\mu_{o_j}^p+|(B^{2^k-1}f)(u_1)-1|^p\mu_{u_1}^{p}+ \sum_{j\geq2}|(B^{2^k-1}f)(u_j)|^p\mu_{u_j}^p\\
 &\qquad+|(B^{2^k-1}f)(v_1)+1|^p\mu_{v_1}^{p}+ \sum_{j\geq2}|(B^{2^k-1}f)(v_j)|^p\mu_{v_j}^p.
\end{align*}
We will show now that all these terms converge to $0$, as k goes to $+\infty$.
\begin{itemize}
    \item Let $k,j\in\N$. If $2^{k}-1\leq j$, then $\Chi^{2^k-1}(o_j)=\{o_{j-2^k+1}\}$, hence.
     $$(B^{2^k-1}f)(o_j)=f(o_{j-2^k+1})=0.$$
    If $2^{k}-1>j$, then $\Chi^{2^k-1}(o_j)=\{u_{2^k-1-j},v_{2^k-1-j}\}$, hence
    $$(B^{2^k-1}f)(o_j)=f(u_{2^k-1-j})+f(v_{2^k-1-j})=0.$$
    Thus
    $$\sum_{j\geq0}|(B^{2^k-1}f)(o_j)|^p\mu_{o_j}^p=0.$$
    \item Let $k\in\N$. We have
    $$(B^{2^k-1}f)(u_1)=\sum_{v\in \Chi^{2^{k}-1}(u_1)}f(v)=f(u_{2^k})=1,$$
    and
    $$ (B^{2^k-1}f)(v_1)=\sum_{v\in \Chi^{2^{k}-1}(v_1)}f(v)=f(v_{2^k})=-1.$$
    \item Let $k\in\N$. We have 
    \begin{align*}
        \sum_{j\geq2}|(B^{2^k-1}f)(u_j)|^p\mu_{u_j}^p&=\sum_{j\geq2}|f(u_{2^k-1+j})|^p\mu_{u_j}^p\\
        &=\sum_{l\geq k+1}|f(u_{2^l})|^p\mu_{u_{2^l-2^k+1}}^p\\
        &=\dfrac{1}{2^p}\sum_{l\geq k+1}\dfrac{1}{2^{p(2^l-2^k)}},
    \end{align*}
    since, for every $l\geq k+1$, $2^l-2^k\geq l$, we get
    $$\sum_{j\geq2}|(B^{2^k-1}f)(u_j)|^p\mu_{u_j}^p \leqslant \dfrac{1}{2}\sum_{l\geq k+1}\dfrac{1}{2^{l}}\underset{k\to+\infty}{\longrightarrow}0.$$
    \item Similarly, for each $k\in \N$, we have
    $$\sum_{j\geq2}|(B^{2^k-1}f)(v_j)|^p\mu_{v_j}^p=\dfrac{1}{2^p}\sum_{l\geq k+1}\dfrac{1}{2^{p(2^l-2^k)}}\underset{k\to+\infty}{\longrightarrow}0.$$
\end{itemize}
Hence
$$\| B^{2^k-1}f-e_{u_1}+e_{v_1}\|_{p,\mu}\underset{k\to+\infty}{\longrightarrow}0.$$
Therefore, $B$ has an orbit with a nonzero limit point. Note that the above computations also work when considering the backward shift operator $B$ as an operator acting on $\ell^1(V,\mu)$ or $c_0(V,\mu)$.
\end{example}

\addtocontents{toc}{\protect\setcounter{tocdepth}{0}}
%\section*{Acknowledgments}

% BIBLIOGRAPHY


\begin{thebibliography}{99}

\bibitem{AA} E. Abakumov and A. Abbar, \emph{Orbits of the Backward Shifts with limit points,} arXiv preprint arXiv:2303.12230

\bibitem{AbKu}  A. Abbar and Y. Kuznetsova, {\it $\Gamma$-supercyclicity of families of translates in weighted $L^p$-spaces on locally compact groups,}  J. Math. Anal. Appl., vol. 495, no 1, p. 124709  (2021).

\bibitem{Ab1}  A. Abbar, {\it $\Gamma$-supercyclicity for bilateral shift operators and translation semigroups,}  Bulletin of L.N. Gumilyov ENU. Mathematics. Computer Science. Mechanics series, vol. 129, no 4, p. 73-79  (2019).
 

\bibitem{ABBM} M. Amouch, A. Bachir, O. Benchiheb and S. Mecheri, \emph{Weakly recurrent operators}, Mediterr. J. Math., vol. 20, no 3, p. 169 (2023).

\bibitem{AmBe2} M. Amouch and O. Benchiheb,  \emph{On a Class of Super-Recurrent Operators}, Filomat, vol. 36, no 11, p. 3701-3708 (2022).

\bibitem{AnAv} A. Ángeles-Romero and R. A. Martínez-Avendaño, \emph{The Forward and Backward Shift on the Hardy Space of a Tree}, arXiv preprint arXiv:2308.11016, (2023).

\bibitem{BaLi} A. Baranov and A. Lishanskii,  \emph{Hypercyclic shifts on lattice graphs}, arXiv preprint arXiv:2312.13934, (2023).


\bibitem{BaMa} F. Bayart and E. Matheron, \emph{Dynamics of linear operators}, Cambridge Tracts In Mathematics, no 179, (2009).



\bibitem{BMPP1} J. Bès, Q. Menet, A. Peris and Y. Puig, {\it Recurrence properties of hypercyclic operators}, Math. Ann.,  vol. 366, no 1, p. 545-572 (2016).

\bibitem{BMPP2} J. Bès, Q. Menet, A. Peris and Y. Puig, {\it Strong transitivity properties for operators}, J. Differential Equations,  vol. 266, no 2-3, p. 1313-1337 (2019).

\bibitem{BBP} T. Bermúdez, A. Bonilla and A. Peris, \emph{On hypercyclicity and supercyclicity criteria,}  Bull. Aust. Math. Soc., vol. 70, no 1, p. 45-54  (2004).

\bibitem{BG} A. Bonilla and K.G. Grosse-Erdmann, \emph{Zero-one law of orbital limit points for weighted shifts,}  arXiv preprint arXiv:2007.01641,  (2020).

\bibitem{BGLP} A. Bonilla and K.G. Grosse-Erdmann, A. López-Martínez, A. Peris, 
 \emph{Frequently recurrent operators,} J. Funct. Anal., vol. 283, no 12, p. 109713 (2022).

\bibitem{CEM} S. Charpentier, R. Ernst and Q. Menet, \emph{$\Gamma$-supercyclicity,}  J. Funct. Anal., vol. 270, no 12, p. 4443-4465  (2016).


\bibitem{CMP} G. Costakis, A. Manoussos and I. Parissis,  \emph{Recurrent linear operators,} Complex Anal. Oper. Theory, vol. 8, no 8, p. 1601-1643 (2014).


\bibitem{CP} G. Costakis and I.  Parissis, \emph{Szemerédi's theorem, frequent hypercyclicity and multiple recurrence,} Math. Scand., vol. 110, no. 2, p.251-272 (2012).

\bibitem{CSa} K. Chan and R. Sanders,  \emph{A weakly hypercyclic operator that is not norm hypercyclic,} J. Operator Theory, vol. 52, no. 1, p. 39-59 (2004).

\bibitem{CS} K. Chan and I. Seceleanu,  \emph{Hypercyclicity of shifts as a zero-one law of orbital limit points,} J. Operator Theory, vol. 67, no 1, p. 257-277 (2012).



\bibitem{F} N. S. Feldman,   \emph{Countably hypercyclic operators,} J. Operator Theory, vol. 50, no 1, p. 107-117 (2003).


\bibitem{GrLo} S. Grivaux and A. López-Martínez, \emph{Recurrence properties for linear dynamical systems: an approach via invariant measures,} J. Math. Pures Appl.,  vol. 169, p. 155-188 (2023).

\bibitem{GLP} S. Grivaux, A. López-Martínez and A. Peris, \emph{Questions in linear recurrence: From the $T\oplus T$-problem to lineability,} arXiv:2212.03652, (2022).

\bibitem{Gr} K.G. Grosse-Erdmann, \emph{Hypercyclic and chaotic weighted shifts,} Studia Math.,  vol. 139, no 1, p. 47-68 (2000).

\bibitem{GrPa} K.G. Grosse-Erdmann and D. Papathanasiou, \emph{Dynamics of weighted shifts on directed trees,}  Indiana Univ. Math. J., vol. 72, p. 263–299 (2023).



\bibitem{GrPa2} K.G. Grosse-Erdmann and D. Papathanasiou, \emph{Chaotic weighted shifts on directed trees,} arXiv preprint arXiv:2303.03980, (2023).

\bibitem{GrPe} K.G. Grosse-Erdmann and A. Peris, \emph{Linear Chaos,}  Springer, London  (2011).
  

\bibitem{HHY}  S. He, Y. Huang, and Z. Yin, \emph{$J^{\mathcal{F}}$-class weighted backward shifts,}  Internat. J. Bifur. Chaos, vol. 28, no 06, p. 1850076 (2018).

\bibitem{JJS} Z. J. Jblónski, I. B. Jung, and  J. Stochel, \emph{Weighted shifts on directed trees,}  American Mathematical Soc., no 1017,  (2012).


\bibitem{Ma} R.  A. Martínez-Avendaño, \emph{Hypercyclicity of shifts on weighted $L^p$ spaces of directed trees,}  J. Math. Anal. Appl., vol. 446, no 1, p. 823-842 (2017).


\bibitem{RiMa} E. Rivera-Guasco and R.  A. Martínez-Avendaño, \emph{The forward and backward shift on the Lipschitz space of a tree,}  arXiv preprint arXiv:1906.02372, (2019).



\bibitem{Sa1} H. N. Salas, \emph{Hypercyclic weighted shifts,} Trans. Amer. Math. Soc., vol. 347, no 3, p. 993-1004 (1995).





\end{thebibliography}
\end{document}